\pdfoutput=1
\RequirePackage{ifpdf}
\ifpdf 
\documentclass[pdftex]{sigma}
\else
\documentclass{sigma}
\fi

\numberwithin{equation}{section}

\newtheorem{Theorem}{Theorem}[section]
\newtheorem*{Theorem*}{Theorem}

\newtheorem{Proposition}[Theorem]{Proposition}

\theoremstyle{definition}
\newtheorem{Definition}[Theorem]{Definition}

\newtheorem{Remark}[Theorem]{Remark}

\def\nn{\notag}
\def\Z2{\mathbb{Z}_2^2}
\def\g{\mathfrak{g}}

\def\osp{\mathfrak{osp}}
\def\gl{\mathfrak{gl}}
\def\sl{\mathfrak{sl}}

\def\qq{$\Z2$-$\q(n)$}
\def\q{\mathfrak{q}}
\def\so{\mathfrak{so}}
\def\sp{\mathfrak{sp}}
\def\Zsl{$\mathbb{Z}_3^2$-$\sl(2)$}
\def\Zosp{$\Z2$-$\osp(m|2n)$}

\usepackage{bm}%
\usepackage{stmaryrd}
\usepackage{nicematrix}
\usepackage{arydshln}
\usepackage{braket}

\begin{document}

\allowdisplaybreaks

\newcommand{\arXivNumber}{2604.08900}

\renewcommand{\PaperNumber}{070}

\FirstPageHeading

\ShortArticleName{Graded Casimir Elements and Central Extensions of Color Lie Algebras}

\ArticleName{Graded Casimir Elements and Central Extensions\\ of Color Lie Algebras}

\Author{Naruhiko AIZAWA~$^{\rm a}$, Ichi FUJII~$^{\rm a}$, Jambulingam SEGAR~$^{\rm b}$ and Joris VAN DER JEUGT~$^{\rm c}$}

\AuthorNameForHeading{N.~Aizawa, I.~Fujii, J.~Segar and J.~Van der Jeugt}

\Address{$^{\rm a)}$~Department of Physics, Graduate School of Science, Osaka Metropolitan University,\\
\hphantom{$^{\rm a)}$}~Sugimoto Campus, Osaka 558-8585, Japan}
\EmailD{\mail{aizawa@omu.ac.jp}, \mail{sq25482d@st.omu.ac.jp}}

\Address{$^{\rm b)}$~VICAS, Ramakrishna Mission Vivekananda College, Chennai-600 004, India}
\EmailD{\mail{segar@rkmvc.ac.in}}

\Address{$^{\rm c)}$~Department of Mathematics, Computer Science and Statistics, 	Ghent University,	\\
\hphantom{$^{\rm c)}$}~Krijgslaan 281-S9, B-9000 Gent, Belgium}
\EmailD{\mail{Joris.VanderJeugt@UGent.be}}

\ArticleDates{Received April 25, 2026, in final form July 30, 2026; Published online August 08, 2026}

\Abstract{A color Lie algebra is a generalization of a Lie (super)algebra by an Abelian group $\Gamma$. The underlying vector space and defining relations of the algebra are graded by $\Gamma$, and a color Lie algebra can admit graded Casimir elements. Furthermore, in that case its loop algebra admits graded central extensions. 	We present a general method for constructing 2nd order graded Casimir elements and graded central extensions for a given color Lie algebra and its loop algebra, respectively. We also show that there exists a large class of color Lie algebras admitting such graded Casimir elements or central extensions by providing three examples, namely, $\mathfrak{sl}(2)$ for $\Gamma = \mathbb{Z}_3^2$, and $\mathfrak{q}(n)$ and $\mathfrak{osp}(m|2n)$ for $\Gamma = \mathbb{Z}_2^2$.}

\Keywords{color Lie algebra; Casimir element; central extension}

\Classification{17B75}

\section{Introduction}

The term color Lie algebra is loosely used, following Rittenberg and Wyler \cite{rw2}, to describe a~generalization of Lie algebras defined by a pair $(\Gamma,\omega)$, where $\Gamma $ is an additive Abelian group and $\omega\colon \Gamma \times \Gamma \to \mathbb{C}^*$ is a mapping called commutative factor (see Section~\ref{SEC:Pre} for a precise definition). This algebraic structure has a long history. It was first introduced by Rimhak Ree in 1960 \cite{Ree}, and later rediscovered by Rittenberg and Wyler \cite{rw1,rw2}. Shortly after this rediscovery, color Lie algebras were extensively studied by Scheunert \cite{scheu,scheuCasi,scheuTen}.

In fact, Lie superalgebras constitute the simplest example of color Lie algebras, corresponding to the case where $\Gamma = \mathbb{Z}_2 = \{ 0, 1 \}$ and $ \omega(\alpha,\beta) =(-1)^{\alpha\beta}$ for $ \alpha,\beta \in \mathbb{Z}_2$.
A striking feature of color Lie algebras is their $\Gamma$-grading structure.
In the case of Lie superalgebras, this means the following: $\Gamma$ decomposes the superalgebra into two subspaces, namely the even ($0$-graded) and odd sectors ($1$-graded).
Furthermore, the defining relations are given, in terms of these sectors, by the following (anti)commutation relations:
\begin{equation*}
	[\mathrm{even}, \mathrm{even}] \subseteq \mathrm{even}, \qquad
	[\mathrm{even},\mathrm{odd}] \subseteq \mathrm{odd}, \qquad
	\{ \mathrm{odd}, \mathrm{odd}\} \subseteq \mathrm{even}.
\end{equation*}
One may see that these relations are compatible with the multiplication law of $\mathbb{Z}_2$.

Another simple but widely studied example is obtained by taking $\Gamma = \Z2 := \mathbb{Z}_2 \times \mathbb{Z}_2$. There are two possible choices of $\omega$ for this $\Gamma$, and the corresponding color Lie algebras consist of four (the order of $\Z2$) sectors, each of which is graded by an element of $\Z2$~\cite{rw2}.
The (anti)commutation relations of these sectors are compatible with the multiplication law of~$\Z2$.
In this case, one can observe many new features which are distinct from those of Lie superalgebras.
One such feature is the \textit{graded} center, namely the set of elements that commute (in the graded sense) with all elements of a color Lie algebra.

For a given finite-dimensional color Lie algebra, one may define its universal enveloping algebra and its graded center.
Elements of the graded center are the graded Casimir elements of the color Lie algebra \cite{GrJav, scheu,scheuCasi,scheuTen}.
One may also consider a loop extension of a given finite-dimensional color Lie algebra, which can admit an extension by graded central elements.
Such graded Casimir elements and central extensions are found in $\Z2$-graded color extensions of~$\sl(2)$ and~$\osp(1|2)$, and a general method to construct them for $\Z2$ is developed in \cite{AFITTsuper, AIKTTslint} (see also Appendix of~\cite{AiKimura}).
The graded central extension is also found for a module extension of the Virasoro algebra with $\Gamma = \Z2$ grading~\cite{AizawaConfType}.

The purpose of this paper is to extend the general method for $\Gamma = \Z2$ to arbitrary Abelian groups and to show, by presenting explicit examples, that there exist a large number of color Lie algebras possessing such graded Casimir elements and graded central extensions.
The examples considered in this paper include the $\mathbb{Z}_3^2$-graded color extension of $\sl(2)$ and the $\Z2$-graded color extensions of the Lie superalgebras~$\q(n)$ and~$\osp(m|2n)$. These examples share the property that they contain copies of the underlying Lie (super)algebra as subalgebras.

This work is motivated by recent advances and applications of color Lie algebras in various fields.
If a color Lie algebra appears in a system under consideration, it is important to determine whether it admits graded Casimir elements (or graded central elements in the case of affine color Lie algebras).
Focusing only on ordinary (non-graded) Casimir elements fails to capture the full structure of the system.

Before starting our analysis, we provide a concise state-of-the-art overview of studies on color Lie algebras in mathematics and physics.
Apart from studies of structure and representation theory (see, for example, \cite{IsStVdJ,KuToclassification,SilvCoho,RyanRefining,SchZhang,Stclassical,StG2}), color Lie algebras have also been explored in the following areas of mathematics and physics:
\begin{enumerate}\itemsep=0pt
\item[(i)] \emph{Noncommutative geometry} (see \cite{PonSch} for a review).
Color Lie algebras defined for $ \Gamma = \mathbb{Z}_2^n := \mathbb{Z}_2 \times \mathbb{Z}_2 \times \cdots \times \mathbb{Z}_2 $ ($n$ times) provide a highly nontrivial example of a noncommutative manifold. The study of geometry on such spaces, initiated in \cite{Covolo,CoGrPo,CoGrPo2012}, has since been extensively developed \cite{BruIb,BruPon,bruce2019products,CoGrPo2016,CoKwoPo,MoVar}.
However, many open problems still remain. One of the most significant issues is the definition of integration on these noncommutative spaces~\cite{Poncin}.
So far, three different proposals for such a definition have been put forward \cite{AiItoIntegral, Poncin,PonSch}, but no consensus has yet been reached.

\item[(ii)] \emph{Integrable systems}.
Recalling that many classical integrable systems, such as soliton equations—and in particular Toda field theory—can be obtained in an algebraic way (see, e.g.,~\cite{BBTbook}), it is natural to expect that the use of color Lie algebras can produce novel types of integrable systems.
This has indeed been realized for $\Gamma = \Z2$: a generalization of the sine-Gordon equation \cite{BruSG}, the Liouville and sinh-Gordon equations \cite{AFITTsuper, AIKTTslint}, as well as the mKdV and KdV equations \cite{AiFujiIto}, have been obtained.

\item[(iii)] \emph{Knot theory}.
It was shown in \cite{AiKimura} that a color Lie algebra admitting a nondegenerate invariant bilinear form gives rise to a universal weight system (UWS) that encodes the first nontrivial layers of finite-type topological invariants of knots.
In general, a UWS is constructed in terms of the Casimir elements of the algebra under consideration.

\item[(iv)] \emph{Generalization of supersymmetry}.
Supersymmetry is a physical realization of the super-Poincar\'e algebra.
Since color Lie algebras generalize Lie superalgebras, one may consider color generalizations of the super-Poincar\'e algebra.
The case $\Gamma = \mathbb{Z}_4 \times \mathbb{Z}_4$ was considered by Wills-Toro \cite{WTIQ,WTz4}, while the cases $\Gamma = \Z2$ and $\Gamma = \mathbb{Z}_2^n$ were introduced by Tolstoy \cite{tolstoy2014} and Bruce \cite{BruceSUSY}, respectively.
Bruce’s algebras are now widely accepted, and generalizations of supersymmetric quantum mechanics \cite{Aizawa_2020,AiAmaDoiZn, BruDup} as well as two-dimensional field theories have been constructed \cite{AiItoTa, BruSigma}.
A~remarkable observation is that the $S$-matrix of quantum field theory admits Bruce's algebra as symmetries, providing an extension of the well-known Haag--{\L}opusza{\'n}ski--Sohnius theorem \cite{ItoNago}.

\item[(v)] P\emph{arastatistics}.
Particles beyond bosons and fermions (paraparticles) have recently attracted renewed attention from both theoretical \cite{wang2025particle} and experimental perspectives \cite{huerta2025particle,alderete2025experimental}.
Color Lie algebras provide a natural framework for describing paraparticles through their representation theory \cite{BALBINO,SVdJ2018,SVdJ2024,toppan2022first,toppan2024braid}.
An important observation is that such paraparticles are, in principle, theoretically detectable; that is, there exist quantum operators that distinguish paraparticles from ordinary bosons and fermions \cite{toppan2021inequivalent,toppanMulti,toppan2024detectability}.
It was also shown that a mixed system of well-known parabosons and parafermions, introduced by Green, forms a color Lie algebra with \smash{$\Gamma = \Z2$} \cite{tolstoyOnce}.

\item[(vi)] \emph{Hidden color Lie algebra structure}.
There exist systems that are formulated without any relation to color Lie algebras. However, their symmetries can be enhanced to form a color Lie algebra, or they can be solved using a color Lie algebra.
As examples, we mention de Sitter supergravity with a positive cosmological constant \cite{Vasiliev}, the L\'evy–Leblond equation \cite{LLEprogress,LLEproceed,RyanLL}, and the Dirac oscillator \cite{IsRyan}.
\end{enumerate}

This paper is organized as follows: Section~\ref{SEC:Pre} collects definitions and basic facts on color Lie algebras that will be used in the subsequent sections.
In Section~\ref{SEC:Method}, after introducing the definition of the graded center of a color Lie algebra, we formulate a general method for constructing a~graded bilinear form and 2nd order graded Casimir elements. We then consider graded central extensions of the loop algebra of a color Lie algebra, and show that these extensions are also determined by the graded bilinear form.
The rest of this paper is devoted to applications of the method developed in Section~\ref{SEC:Method}.
We present three examples of color Lie algebras that admit graded Casimir elements.
Their loop algebras admit graded central extensions.
In Section~\ref{SEC:Eg1}, the $\Gamma = \Z2$ color extension of the Lie superalgebra $\q(n)$ and the $\Gamma = \mathbb{Z}^2_3$ extension of the Lie algebra $\sl(2)$ are investigated.
In Section~\ref{SEC:osp}, a~more involved case, namely the Lie superalgebra $\osp(m|2n)$ for $\Gamma = \Z2$, is investigated in detail.
Finally, we summarize the present work and provide some concluding remarks in Section~\ref{SEC:Conclusion}.

\section{Preliminaries} \label{SEC:Pre}

This section presents the basic notions of color Lie algebras that will be used in this paper.
We give only the minimum necessary for the present study.
The reader may refer to \cite{scheu,scheuTen,zhang2025} for further details.

Let $\Gamma$ be an additive Abelian group. We fix $\Gamma$ and suppose that all vector spaces and algebras are defined over the field $\mathbb{C}$.
A vector space $V$ is said to be $\Gamma$-graded if it can be written as a~direct sum of subspaces indexed by elements of $\Gamma$:
\begin{equation*}
	V = \bigoplus_{\gamma \in \Gamma} V_{\gamma}.
\end{equation*}
We say that an element of $ V_{\gamma}$ has \textit{degree} $\gamma$.
A linear mapping $ f\colon V \to W $ between two $\Gamma$-graded vectors spaces $V$ and $ W $ is said to be of degree $\gamma \in \Gamma $ if
\begin{equation*}
	f(V_{\alpha}) \subset W_{\alpha+\gamma}, \qquad \forall \alpha \in \Gamma.
\end{equation*}
The vector space $ \operatorname{Hom}	(V,W)$ of all linear mappings of $ V $ into $ W $ is naturally $\Gamma$-graded.

An algebra $S$ is called $ \Gamma$-graded if its underlying vector space is $\Gamma$-graded and if it satisfies
\begin{equation*}
	S_{\alpha} S_{\beta} \subset S_{\alpha+\beta}, \qquad \alpha, \beta \in \Gamma.
\end{equation*}
To define color Lie algebras ($\Gamma$-graded $\omega$-Lie algebras), we introduce a mapping called a \textit{commutative factor} on $\Gamma$ (called commutation factor in \cite{scheu}) $ \omega \colon \Gamma \times \Gamma \to \mathbb{C}^{\ast}$, which satisfies
\begin{align}
	&\omega(\alpha,\beta)= \omega(\beta,\alpha)^{-1},
	\nn \\
	&\omega(\alpha,\beta+\gamma)= \omega(\alpha,\beta) \omega(\alpha,\gamma),
	\nn \\
	&\omega(\alpha+\beta,\gamma)= \omega(\alpha,\gamma) \omega(\beta,\gamma), \qquad
	\forall \alpha, \beta, \gamma.
	\label{omegaprop1}
\end{align}
These relations imply
\begin{align}
	&\omega(0,0)= \omega(0,\alpha) = \omega(\alpha,0) =1, \qquad \omega(\alpha,\alpha) = \pm 1,
	\nn\\
	&\omega(\alpha,-\beta)= \omega(-\alpha,\beta) = \omega(\beta,\alpha), \qquad
	\omega(-\alpha,-\beta) = \omega(\alpha,\beta), \qquad \forall \alpha, \beta.
	\label{omegaprop2}
\end{align}

\begin{Definition}
	Let \smash{$\g = \bigoplus_{\gamma\in\Gamma}\g_{\gamma}$} be a $\Gamma$-graded vector space and let $\omega$ be a commutative factor on $\Gamma.$
	Then $\g$ is called a color Lie algebra, if $\g$ admits a bilinear map
	$ \llbracket\ , \ \rrbracket\colon \g \times \g \to \g, $ which is of degree $0$, satisfying the following conditions:
	\begin{align}
		&\llbracket X,Y \rrbracket= -\omega(\alpha,\beta)\llbracket Y,X\rrbracket,
		\nonumber\\
		&\llbracket X, \llbracket Y, Z \rrbracket \rrbracket= \llbracket \llbracket X, Y \rrbracket, Z \rrbracket + \omega(\alpha,\beta) \llbracket Y, \llbracket X, Z \rrbracket \rrbracket,
		\qquad \forall X, Y, Z \in \g,
		\label{colorJacobi}
	\end{align}
	where $ \alpha = \deg(X)$, $\beta = \deg(Y)$ are degrees of the elements $ X$, $Y$, respectively.
\end{Definition}	

We call the bilinear map $ \llbracket\ , \ \rrbracket $ a color Lie bracket.
In some literature, a distinction is made between color Lie algebras and color Lie superalgebras.
The difference lies in the sign of $ \omega(\alpha,\alpha)$.
If $ \omega(\alpha,\alpha) = 1 $ for all $ \alpha \in \Gamma,$ then $\g$ is called a color Lie algebra. If there exists at least one $\alpha \in \Gamma$ such that $\omega(\alpha,\alpha) = -1$, then $\g$ is called a color Lie superalgebra. We do not make this distinction in this paper.

Any $\Gamma$-graded associative algebra $ S$ may be turned into a color Lie algebra by defining the color Lie bracket as
\begin{equation}
	\llbracket X, Y \rrbracket = XY - \omega(\alpha,\beta) YX, \qquad
	X, Y \in S, \quad \alpha = \deg(X),\quad \beta = \deg(Y)
	\label{ColorBracket}
\end{equation}
for any commutative factor $\omega.$
In the case of $ \omega(\alpha,\beta) = +1 $ or $ -1$ for all $\alpha, \beta \in \Gamma, $ the color Lie bracket is a commutator or an anticommutator.

Like for Lie algebras and Lie superalgebras, $ \operatorname{End}(V) := \operatorname{Hom}(V,V)$ on a $\Gamma$-graded vector space $V$ provides an example of color Lie algebras.
Observe that $\operatorname{End}(V)$ is an associative $\Gamma$\nobreakdash-graded algebra with the composition of endomorphisms as the multiplication.
Then, \textit{the general linear color Lie algebra} $ \gl(V,\omega)$ is defined as $\operatorname{End}(V)$ equipped with the color Lie bracket given by~\eqref{ColorBracket}.
To be more explicit, we present a basis of $\gl(V,\omega)$ in terms of matrices.
Suppose that $ \alpha < \beta < \gamma < \cdots $ is a fixed total order for $\Gamma.$
Choose an ordered basis for each graded subspace of $V$ and order the basis of $ V_{\alpha}, V_{\beta}, V_{\gamma}, \dots $ according to the total order for $\Gamma.$
This yields an ordered basis of~$V$.
Then an element $E(\alpha,\beta) \in \operatorname{Hom}(V_{\beta}, V_{\alpha})$ is represented by a block matrix of the following form:
\begin{equation}
	E(\alpha,\beta)=
	\bordermatrix{ & \alpha & \beta & \gamma & \cdots \cr
		\alpha & \bm{\cdot} & \ast & \bm{\cdot} & \ldots \cr
		\beta & \bm{\cdot} & \bm{\cdot} & \bm{\cdot} \cr
		\gamma & \bm{\cdot} & \bm{\cdot} & \bm{\cdot} & \ldots \cr
		\vdots & \vdots & \vdots & \vdots & \cr
	},
	\label{Edef}
\end{equation}
where the dot $\bm{\cdot}$ denotes a zero matrix and the asterisk $\ast$ denotes the block containing nonzero entries.
We denote the matrix unit relative to the ordered basis of $V$ by $ E(\alpha,\beta)_{ij} $ where $ 1 \leq i \leq \dim V_{\alpha}$, $1 \leq j \leq \dim V_{\beta}$. It is the matrix~\eqref{Edef} whose only nonzero entry is $1$ at the $(i,j)$-position of the asterisk block.
It is easy to see the following:
\begin{align*}
	&\deg(E(\alpha,\beta)_{ij})= \alpha - \beta,
	\qquad
	E(\alpha,\beta)_{ij} E(\gamma,\delta)_{k\ell}= \delta_{\beta\gamma} \delta_{jk} E(\alpha,\delta)_{i\ell}.
\end{align*}
It follows that the matrix units form a basis of $\gl(V,\omega)$ and we can express the defining relations of $\gl(V,\omega)$ as
\begin{equation*}
	\llbracket E(\alpha,\beta)_{ij}, E(\gamma,\delta)_{k\ell} \rrbracket =
	\delta_{\beta\gamma} \delta_{jk} E(\alpha,\delta)_{i\ell} - \omega(\alpha-\beta,\gamma-\delta) \delta_{\alpha\delta} \delta_{\ell i} E(\gamma,\beta)_{kj}.
\end{equation*}

The degree $\alpha$ subspace of $\gl(V,\omega)$ is
$ \gl(V,\omega)_{\alpha} := \sum_{\beta \in \Gamma} \operatorname{Hom}(V_{\beta}, V_{\beta+\alpha}).$
An element $ X(\alpha,\beta) \in \gl(V,\omega)_{\alpha-\beta}$ is a linear combination of $E(\alpha,\beta)_{ij}$:
\begin{equation*}
	X(\alpha,\beta) = \sum_{i=1}^{\dim V_\alpha} \sum_{j=1}^{\dim V_\beta} x_{ij} E(\alpha,\beta)_{ij}, \qquad x_{ij} \in \mathbb{C},
\end{equation*}
and an element $ X \in \gl(V,\omega)$ can be written as
\begin{equation*}
	X = \sum_{\alpha,\beta\in \Gamma} X(\alpha,\beta).
\end{equation*}
We now define a generalization of the trace.

\begin{Definition}
	The color trace $\operatorname{ctr}\colon \operatorname{End}(V) \to \mathbb{C}$ is defined by
	\begin{equation*}
		\operatorname{ctr} X = \sum_{\alpha \in \Gamma} \omega(\alpha,\alpha) \operatorname{tr} X(\alpha,\alpha),
	\end{equation*}
	where $\operatorname{tr}$ denotes the usual trace of a matrix. Clearly, the color trace is a mapping of degree zero.
\end{Definition}
\begin{Proposition}[\cite{scheuTen,zhang2025}] We have
	\begin{equation}
		\operatorname{ctr}(X(\alpha) X(\beta)) = \omega(\alpha,\beta) \operatorname{ctr}(X(\beta) X(\alpha)),
		\label{ctrperm}
	\end{equation}
	where $ X(\alpha)$ and $ X(\beta) $ are homogeneous elements of $\gl(V,\omega)$ of degree $\alpha$ and $\beta$, respectively.
\end{Proposition}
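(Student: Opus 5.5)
By bilinearity of the color trace and of matrix multiplication, it suffices to prove \eqref{ctrperm} when $X(\alpha)$ and $X(\beta)$ are single blocks. So take $X(\alpha)=A\in\operatorname{Hom}(V_{\gamma},V_{\gamma+\alpha})$ and $X(\beta)=B\in\operatorname{Hom}(V_{\delta},V_{\delta+\beta})$, or, even more concretely, matrix units $E(\gamma+\alpha,\gamma)_{ij}$ and $E(\delta+\beta,\delta)_{k\ell}$. Every homogeneous element of degree $\alpha$ is a sum of such blocks over $\gamma\in\Gamma$, and both sides of \eqref{ctrperm} are bilinear, so the reduction is legitimate.

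\textbf{Case of vanishing products.} The product $AB$ is nonzero only if $\delta+\beta=\gamma$. It then lies in $\operatorname{Hom}(V_\delta,V_{\gamma+\alpha})$, and contributes to the color trace only when it is a diagonal block, i.e.\ $\gamma+\alpha=\delta$. Together these force $\alpha+\beta=0$. The same analysis applies to $BA$. Hence if $\alpha+\beta\neq 0$, or if the block indices do not match up, both color traces vanish and \eqref{ctrperm} holds trivially.

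\textbf{Case $\beta=-\alpha$, $\delta=\gamma+\alpha$.} Here $AB$ is a block in position $(\gamma+\alpha,\gamma+\alpha)$ and $BA$ is a block in position $(\gamma,\gamma)$. Using the ordinary identity $\operatorname{tr}(AB)=\operatorname{tr}(BA)$ for rectangular matrices, we get
\[
\operatorname{ctr}(AB)=\omega(\gamma+\alpha,\gamma+\alpha)\operatorname{tr}(BA), \qquad \operatorname{ctr}(BA)=\omega(\gamma,\gamma)\operatorname{tr}(BA).
\]
It therefore remains to show
\[
\omega(\gamma+\alpha,\gamma+\alpha)=\omega(\alpha,-\alpha)\,\omega(\gamma,\gamma).
\]

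\textbf{The key identity.} This is the main, though small, obstacle. By bimultiplicativity \eqref{omegaprop1},
\[
\omega(\gamma+\alpha,\gamma+\alpha)=\omega(\gamma,\gamma)\,\omega(\gamma,\alpha)\,\omega(\alpha,\gamma)\,\omega(\alpha,\alpha)=\omega(\gamma,\gamma)\,\omega(\alpha,\alpha),
\]
since $\omega(\gamma,\alpha)\omega(\alpha,\gamma)=1$. By \eqref{omegaprop2}, $\omega(\alpha,-\alpha)=\omega(\alpha,\alpha)$. Combining the two gives the required identity, and summing over blocks completes the proof.
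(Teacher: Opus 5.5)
Your proof is correct and complete. The reduction to single blocks is legitimate by bilinearity. You correctly see that $AB$ and $BA$ have a nonzero diagonal block under exactly the same two conditions, $\delta=\gamma+\alpha$ and $\delta+\beta=\gamma$. The identity $\omega(\gamma+\alpha,\gamma+\alpha)=\omega(\gamma,\gamma)\,\omega(\alpha,\alpha)=\omega(\gamma,\gamma)\,\omega(\alpha,-\alpha)$ follows from bimultiplicativity and \eqref{omegaprop2} as you say.

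There is nothing in the paper to compare against. It gives no proof of its own and simply quotes the result from the cited works. Your block-by-block argument is the standard direct verification and supplies that proof.

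If you want to avoid the case split, your key identity can be repackaged. Let $\Pi$ be the grading operator with $\Pi|_{V_\gamma}=\omega(\gamma,\gamma)$, so that $\operatorname{ctr}X=\operatorname{tr}(\Pi X)$. The identity then says $\Pi A=\omega(\alpha,\alpha)A\Pi$ for any $A$ of degree $\alpha$. Cyclicity of the ordinary trace gives $\operatorname{ctr}(AB)=\omega(\alpha,\alpha)\operatorname{ctr}(BA)$ directly. Both sides vanish unless $\alpha+\beta=0$, and in that case $\omega(\alpha,\alpha)=\omega(\alpha,\beta)$.
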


Obviously,
$ \sl(V,\omega) := \{ X \in \gl(V,\omega)\mid \operatorname{ctr} X = 0 \} $
is a $\Gamma$-graded subalgebra of $\gl(V,\omega). $
To introduce $\Gamma$-graded subalgebras of $\gl(V,\omega)$, we introduce a bilinear form on~$V$.
Let $ J\colon V \times V \to \mathbb{C}$ be a degree-zero and nondegenerate bilinear form on~$V$.
The condition $\deg(J) = 0 $ implies that for any $ x, y \in V$
\begin{equation*}
	J(x,y) = 0 \qquad \text{if} \ \deg(x) + \deg(y) \neq 0.
\end{equation*}
The nondegeneracy of $J$ requires that $ \dim V_{\alpha} = \dim V_{-\alpha} $ for all $\alpha \in \Gamma$.
This can be seen by representing $J$ in matrix form relative to the ordered basis of~$V$.
Suppose that $ \alpha \neq -\alpha $ and $ \alpha' = -\alpha'$ for $ \alpha, \alpha' \in \Gamma,$ and they are ordered as $ \cdots < \alpha < \cdots < \alpha' < \cdots < - \alpha < \cdots $.
Then $J$ has the following block matrix structure:
\begin{equation*}
	J =
	\bordermatrix{ & \alpha & \cdots & \alpha' & \cdots& -\alpha & \cdots \cr
		\alpha & \bm{\cdot} & \cdots & \bm{\cdot} & \cdots & \ast & \cdots \cr
		\; \vdots & \vdots & & \vdots & & \vdots & \cdots \cr
		\alpha' & \bm{\cdot} & \cdots & \ast & \cdots & \bm{\cdot} & \cdots \cr
		\; \vdots & \vdots & & \vdots & & \vdots & \cdots \cr
		-\alpha & \ast & \cdots & \bm{\cdot} & \cdots & \bm{\cdot} & \cdots \cr
		\;\vdots & \vdots & & \vdots & & \vdots &
	},
\end{equation*}
where the symbols $ \bm{\cdot}$ and $\ast$ have the same meaning as in \eqref{Edef}.
Note that the rows and columns intersecting a $\ast$ block contain only zero entries.
If $\dim V_{\alpha} \neq \dim V_{-\alpha},$ then the $(\alpha,-\alpha)$ and $(-\alpha,\alpha)$ blocks are rectangular matrices (not square).
This implies that $\det J = 0$, since one necessarily selects a zero entry when choosing one entry from each row and each column.
This shows that $J$ is degenerate.

We are now in a position to define subalgebras of $\gl(V,\omega)$ using the bilinear form $J.$

\begin{Proposition}[\cite{scheuTen}]\label{Prop:SulAlg}
	Let $ u$, $v$ be homogeneous elements of $V$ and $ J$ be a degree-zero nondegenerate bilinear form on $V.$
	Suppose that $J$ is either symmetric or skew-symmetric.
	Then the set of all homogeneous elements of $\gl(V,\omega)$ satisfying
\begin{equation}
J(Xu,v)+ \omega(\gamma,\alpha) J(u,Xv) = 0, \qquad X \in \gl(V,\omega)_{\gamma},\quad \forall u \in V_{\alpha}, \quad v \in V_{\beta}	\label{SubalgCond}
\end{equation}
	forms a $\Gamma$-graded subalgebra of $\gl(V,\omega)$.
\end{Proposition}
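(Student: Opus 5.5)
Let $\mathfrak{h}$ denote the span of all homogeneous $X$ satisfying \eqref{SubalgCond}, so $\mathfrak{h} = \bigoplus_\gamma \mathfrak{h}_\gamma$ with $\mathfrak{h}_\gamma \subset \gl(V,\omega)_\gamma$. It is $\Gamma$-graded by construction, since condition \eqref{SubalgCond} is imposed degree by degree. For fixed $\gamma$ the condition is linear in $X$, so each $\mathfrak{h}_\gamma$ is a vector subspace. It therefore remains to show closure under the color bracket \eqref{ColorBracket}. By bilinearity it suffices to take homogeneous $X \in \mathfrak{h}_\gamma$ and $Y \in \mathfrak{h}_\delta$ and show $\llbracket X,Y\rrbracket = XY - \omega(\gamma,\delta)YX$ satisfies \eqref{SubalgCond} with degree $\gamma+\delta$.

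Fix homogeneous $u \in V_\alpha$ and $v \in V_\beta$. First compute $J(XYu,v)$ by applying the condition for $X$ to the vector $Yu$, which has degree $\alpha+\delta$. Then apply the condition for $Y$ to $u$:
\begin{equation*}
J(XYu,v) = -\omega(\gamma,\alpha+\delta) J(Yu,Xv) = \omega(\gamma,\alpha+\delta)\,\omega(\delta,\alpha) J(u,YXv).
\end{equation*}
In the same way,
\begin{equation*}
J(YXu,v) = \omega(\delta,\alpha+\gamma)\,\omega(\gamma,\alpha) J(u,XYv).
\end{equation*}
Combining these gives
\begin{equation*}
J(\llbracket X,Y\rrbracket u,v) = \omega(\gamma,\alpha)\omega(\delta,\alpha)\bigl[\omega(\gamma,\delta) J(u,YXv) - \omega(\gamma,\delta)\omega(\delta,\gamma) J(u,XYv)\bigr].
\end{equation*}
Here we use the bimultiplicativity in \eqref{omegaprop1}.

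Now simplify with $\omega(\gamma,\delta)\omega(\delta,\gamma)=1$ and $\omega(\gamma,\alpha)\omega(\delta,\alpha)=\omega(\gamma+\delta,\alpha)$. The right-hand side becomes
\begin{equation*}
-\omega(\gamma+\delta,\alpha)\, J\bigl(u,(XY-\omega(\gamma,\delta)YX)v\bigr).
\end{equation*}
This is exactly \eqref{SubalgCond} for $\llbracket X,Y\rrbracket$ in degree $\gamma+\delta$. Since $\deg J=0$, all terms vanish automatically unless $\alpha+\beta+\gamma+\delta=0$, so no case distinction on $\beta$ is needed.

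The main obstacle is only the bookkeeping of the $\omega$ factors; the argument reduces to the chain of identities above. I expect the (skew-)symmetry of $J$ not to be used in closure. It matters only for the consistency of the definition, namely that the condition could equally be imposed with the arguments of $J$ interchanged. I would remark on this at the end, and check that swapping $u$ and $v$ gives an equivalent condition using $J(x,y)=\pm J(y,x)$ and $\omega(\gamma,\alpha)\omega(\gamma,\beta)=\omega(\gamma,-\gamma)=\omega(\gamma,\gamma)^{-1}$ on the nonvanishing support.
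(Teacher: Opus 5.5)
Your argument is correct and follows the paper's route. You prove closure under the color bracket, which the paper only asserts is easy to verify; you carry it out correctly and rightly observe that the symmetry of $J$ plays no role there. You then add a remark on compatibility of the condition under exchanging $u$ and $v$.

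One caution about that closing remark. With plain $J(x,y)=\pm J(y,x)$, your identity $\omega(\gamma,\alpha)\omega(\gamma,\beta)=\omega(\gamma,\gamma)^{-1}$ turns the swapped condition into $J(Xu,v)+\omega(\gamma,\gamma)\,\omega(\gamma,\alpha)J(u,Xv)=0$. This agrees with \eqref{SubalgCond} only when $\omega(\gamma,\gamma)=1$; otherwise the two conditions together force $X=0$ by nondegeneracy. So the symmetry actually needed for that check is the twisted one $J(u,v)=\pm\omega(\alpha,\beta)J(v,u)$ of \eqref{Jsymm}, not plain (skew-)symmetry.
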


\begin{proof}
	It is easy to verify that if $ X \in \gl(V,\omega)_{\gamma} $ and $ Y \in \gl(V,\omega)_{\delta}$ satisfy \eqref{SubalgCond}, then $ \llbracket X, Y \rrbracket \in \gl(V,\omega)_{\gamma+\delta}$ also satisfies~\eqref{SubalgCond}.
	
	Next, consider the relation obtained by exchanging $ u $ and $ v $ in \eqref{SubalgCond}:
	\begin{equation}
		J(Xv,u) + \omega(\gamma,\beta) J(v, Xu) = 0. \label{SubalgCond2}
	\end{equation}
	A sufficient condition for the compatibility of this relation with \eqref{SubalgCond} is
	\begin{equation}
		J(u,v) = \pm \omega(\alpha,\beta) J(v,u). \label{Jsymm}
	\end{equation}
	Using this relation and the properties of the commutative factor given in \eqref{omegaprop1} and \eqref{omegaprop2}, one can see that the left hand side of \eqref{SubalgCond2} reduces to that of \eqref{SubalgCond}.
	Recall that $ J$ is nonvanishing if and only if $ \alpha + \beta = 0.$
	In this case, \eqref{Jsymm} yields
	\begin{equation*}
		J(u,v) = \pm \omega(\alpha,-\alpha) J(v,u) \stackrel{\eqref{omegaprop2}}{=} \pm J(v,u).
	\end{equation*}
Hence, when the bilinear form $J$ is symmetric or skew-symmetric, it is compatible with~\eqref{SubalgCond}.
\end{proof}

One may define color extensions of orthogonal, symplectic, and orthosymplectic Lie superalgebras, and so on.
A study in this direction, as well as the quantization of color Lie algebras and their affine analogue, is carried out in~\cite{zhang2026}.

Finally, we define a graded representation.

\begin{Definition}
	A graded representation $(\rho,V) $ of a color Lie algebra $\g$ in a $\Gamma$-graded vector space $V$ is a color Lie algebra homomorphism $ \rho\colon \g \to \gl(V,\omega)$ of degree zero:
	\begin{equation*}
		\rho(\llbracket X, Y \rrbracket) = \llbracket \rho(X), \rho(Y) \rrbracket,
		\qquad X, Y \in \g.
	\end{equation*}
\end{Definition}

\section{Graded Casimir elements and graded central extensions} \label{SEC:Method}

\subsection[Invariant bilinear form on color Lie algebras and 2nd order Casimir elements]{Invariant bilinear form on color Lie algebras\\ and 2nd order Casimir elements}

Let $\g$ be a color Lie algebra defined for a commutative factor $\omega$ on an Abelian group $\Gamma$.
We begin with some definitions.

\begin{Definition}\label{DEF:gcenter}
	The graded center $Z(\g)$ of $\g$ is the set of elements which graded-commute with all elements of $\g$:
	\begin{equation*}
		Z(\g) = \{ X \in \g \mid \llbracket X, Y \rrbracket = 0, \ \forall Y \in \g \}.
	\end{equation*}
\end{Definition}

Let $T(\g) := \sum_{r=0}^{\infty	} \g^{\otimes r} $ be the tensor algebra over~$\g$.
$T(\g)$ has $ \mathbb{Z}_+ \times \Gamma$-grading. The $\mathbb{Z}_+$ grading is obvious and the $\Gamma$-grading is given by
\begin{equation*}
	T(\g) = \sum_{\alpha\in \Gamma} T(\g)_{\alpha},
	\qquad
	T(\g)_{\alpha} = \sum_{r=0}^{\infty}\biggl( \sum_{\alpha_1+ \cdots + \alpha_r = \alpha} \g_{\alpha_1} \otimes \g_{\alpha_2} \otimes \cdots \otimes \g_{\alpha_r} \biggr).
\end{equation*}
Let $I(\g)$ be the two-sided ideal of $T(\g)$ generated by
\begin{equation*}
	X \otimes Y - \omega(\alpha,\beta) Y \otimes X - \llbracket X, Y \rrbracket, \qquad X \in \g_{\alpha},\quad Y \in \g_{\beta}.
\end{equation*}
The universal enveloping algebra of $\g$ is defined by $U(\g) = T(\g)/I(\g).$
Obviously, $U(\g)$ is a unital associative $\Gamma$-graded algebra.

\begin{Definition}
	Graded Casimir elements of $\g$ are elements of the graded center of $U(\g)$.
	Namely, if $C \in U(\g)$ is a Casimir element of degree $\gamma$, $ C X = \omega(\gamma, \alpha) XC $ for all homogeneous elements $ X \in U(\g) $ of degree~$\alpha$.
\end{Definition}

We denote an ordered basis of $\g_{\alpha}$ by $ X_{\alpha_i}$, $1 \leq i \leq \dim \g_{\alpha}$, and write the defining relations of~$\g$~as
\begin{align}
	& 	\llbracket X_{\alpha_i}, X_{\beta_j} \rrbracket = \sum_{\gamma_k} f_{\alpha_i, \beta_j}^{\quad\ \gamma_k} X_{\gamma_k}, \qquad f_{\alpha_i, \beta_j}^{\quad\ \gamma_k} \in \mathbb{C}, \label{sumgammak}
	\\
	& f_{\alpha_i, \beta_j}^{\quad\ \gamma_k} = 0 \qquad \text{if} \quad \alpha + \beta \neq \gamma, \nn
\end{align}
where $\sum_{\gamma_k}$ is a shorthand notation for $\sum_{\gamma \in \Gamma} \sum_{k=1}^{\dim \g_{\gamma}}$.
The relation
\[
\llbracket X_{\alpha_i}, X_{\beta_j} \rrbracket = -\omega(\alpha,\beta) \llbracket X_{\beta_j}, X_{\alpha_i} \rrbracket
\]
 is equivalent to the relation
\begin{equation}
	f_{\alpha_i, \beta_j}^{\quad\ \gamma_k} = -\omega(\alpha,\beta) f_{\beta_j,\alpha_i}^{\quad\ \gamma_k}.
	\label{fskew}
\end{equation}
Let $(\rho,V)$ be an irreducible graded representation of $\g$ in a $\Gamma$-graded vector space $V$.
If a~degree~$\mu$ element $ M^{\mu} \in \operatorname{End}(V)$ graded-commutes with $\rho(X)$ for all $ X \in \g$, i.e.,
\begin{equation*}
	\llbracket M^{\mu}, \rho(X) \rrbracket = 0, \qquad \forall X \in \g,
\end{equation*}
then we call $M^{\mu}$ a \textit{commutant} of degree $\mu$.
Clearly, the degree zero commutant is (a multiple of) the identity.

\begin{Proposition} \label{PROP:BF}
Assume $M^{-\mu}$ is a commutant of degree $-\mu$.
	We define a bilinear form $\eta^{\mu}\colon \g \times \g \to \mathbb{C}$ by
	\begin{equation*}
		\eta^{\mu}(X_{\alpha_i}, X_{\beta_j}) = \operatorname{ctr}( \rho(X_{\alpha_i}) M^{-\mu} \rho(X_{\beta_j}) ), 
	\end{equation*}
	which is also denoted by $\eta^{\mu}_{\alpha_i,\beta_j}$. This bilinear form satisfies the following properties:
	\begin{enumerate}\itemsep=0pt
		\item[$(1)$] $ \eta^{\mu}_{\alpha_i, \beta_j} = 0$ if $ \alpha + \beta \neq \mu$,
		\item[$(2)$] $ \eta^{\mu}_{\beta_j,\alpha_i} = \omega(\mu,\mu) \omega(\alpha,\beta) \eta^{\mu}_{\alpha_i, \beta_j} $,
		\item[$(3)$] $ \eta^{\mu}(\llbracket X_{\alpha_i}, X_{\beta_j} \rrbracket, X_{\gamma_k} ) = \omega(\mu,\beta) \eta^{\mu}(X_{\alpha_i}, \llbracket X_{\beta_j}, X_{\gamma_k} \rrbracket) $.
	\end{enumerate}
	Relation $(3)$ implies the $\g$-invariance of $\eta^{\mu}$.
\end{Proposition}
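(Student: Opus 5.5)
All three properties follow from two facts about the color trace: it has degree zero, and it satisfies the cyclic identity \eqref{ctrperm}. We use these together with the commutant relation $\llbracket M^{-\mu}, \rho(X)\rrbracket = 0$. Throughout we write $A=\rho(X_{\alpha_i})$, $B=\rho(X_{\beta_j})$ and $C=\rho(X_{\gamma_k})$, which are homogeneous of degrees $\alpha,\beta,\gamma$, since $\rho$ has degree zero. Let $M=M^{-\mu}$. The commutant condition, written with the bracket \eqref{ColorBracket}, reads
\begin{equation*}
MA = \omega(-\mu,\alpha) AM = \omega(\alpha,\mu) AM,
\end{equation*}
where the last equality uses \eqref{omegaprop2}.

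For (1), note that $AMB$ is homogeneous of degree $\alpha-\mu+\beta$. The color trace only sees the diagonal blocks $X(\alpha,\alpha)$, which have degree $0$, so $\operatorname{ctr}$ vanishes on every homogeneous element of nonzero degree. Hence $\eta^{\mu}_{\alpha_i,\beta_j}=0$ unless $\alpha+\beta=\mu$.

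For (2), we may assume $\alpha+\beta=\mu$. Apply \eqref{ctrperm} to the product $A\cdot(MB)$, where $MB$ has degree $\beta-\mu=-\alpha$:
\begin{equation*}
\operatorname{ctr}(AMB)=\omega(\alpha,-\alpha)\operatorname{ctr}(MBA)=\operatorname{ctr}(MBA).
\end{equation*}
Next move $M$ past $B$ using the commutant relation, $MB=\omega(\beta,\mu)BM$. This gives $\eta^\mu_{\alpha_i,\beta_j}=\omega(\beta,\mu)\,\eta^\mu_{\beta_j,\alpha_i}$. Substituting $\mu=\alpha+\beta$ and using bimultiplicativity,
\begin{equation*}
\omega(\beta,\mu)=\omega(\beta,\alpha)\,\omega(\beta,\beta).
\end{equation*}
We then need to check that the inverse of this factor equals $\omega(\mu,\mu)\omega(\alpha,\beta)$. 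Expanding,
\begin{equation*}
\omega(\mu,\mu)=\omega(\alpha,\alpha)\,\omega(\beta,\beta)\,\omega(\alpha,\beta)\,\omega(\beta,\alpha)=\omega(\alpha,\alpha)\,\omega(\beta,\beta),
\end{equation*}
since $\omega(\alpha,\beta)\omega(\beta,\alpha)=1$. So $\omega(\mu,\mu)\omega(\alpha,\beta)=\omega(\alpha,\alpha)\omega(\beta,\beta)\omega(\alpha,\beta)$. The required identity is therefore equivalent to $\omega(\alpha,\alpha)=1$, which is not automatic. This is the main obstacle.

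To get around it, I would exploit the fact that the only nonzero pairings occur when $\alpha+\beta=\mu$, and I would try a different cyclic move. Move $B$ to the front instead, i.e.\ apply \eqref{ctrperm} to $(AM)\cdot B$:
\begin{equation*}
\operatorname{ctr}(AMB)=\omega(\alpha-\mu,\beta)\operatorname{ctr}(BAM)=\omega(\alpha-\mu,\beta)\,\omega(\alpha,\mu)\operatorname{ctr}(BMA).
\end{equation*}
Here $\alpha-\mu=-\beta$, so $\omega(\alpha-\mu,\beta)=\omega(\beta,\beta)^{-1}=\omega(\beta,\beta)$. Also $\omega(\alpha,\mu)=\omega(\alpha,\alpha)\omega(\alpha,\beta)$. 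The resulting factor is $\omega(\alpha,\alpha)\omega(\beta,\beta)\omega(\alpha,\beta)=\omega(\mu,\mu)\omega(\alpha,\beta)$, which is exactly what (2) asks for. I expect this bookkeeping of $\omega$-factors, and in particular choosing the cyclic rearrangement that avoids the spurious $\omega(\alpha,\alpha)$, to be the only delicate point.

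For (3), expand the bracket as $\rho(\llbracket X,Y\rrbracket)=AB-\omega(\alpha,\beta)BA$, so the left side is
\begin{equation*}
\operatorname{ctr}(ABMC)-\omega(\alpha,\beta)\operatorname{ctr}(BAMC).
\end{equation*}
The right side is $\omega(\mu,\beta)$ times
\begin{equation*}
\operatorname{ctr}(AMBC)-\omega(\beta,\gamma)\operatorname{ctr}(AMCB).
\end{equation*}
Commute $M$ leftward through $B$ with $BM=\omega(\mu,\beta)MB$, which turns $\omega(\mu,\beta)\operatorname{ctr}(AMBC)$ into $\operatorname{ctr}(ABMC)$, so the first terms match. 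For the second terms, use \eqref{ctrperm} to move $B$ cyclically from the end to the front, and then move $M$ to the same position via the commutant relation. Since everything vanishes unless $\alpha+\beta+\gamma=\mu$, the remaining factors should reduce by bimultiplicativity to $\omega(\alpha,\beta)$. Finally, (3) is the color analogue of $\eta([X,Y],Z)=\eta(X,[Y,Z])$, which is the $\g$-invariance statement.
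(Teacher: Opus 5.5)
Your overall strategy is the paper's: a degree count for (1), and the cyclic identity \eqref{ctrperm} plus the commutant relation for (2) and (3). Parts (1) and (3) are fine. In (3) the second-term check, which you left as ``should reduce'', does close: since $\deg(AMC)=-\beta$, one gets $\operatorname{ctr}(AMCB)=\omega(\beta,\beta)\operatorname{ctr}(BAMC)$, and $\omega(\mu,\beta)\omega(\beta,\gamma)\omega(\beta,\beta)=\omega(\alpha,\beta)$ when $\mu=\alpha+\beta+\gamma$. No further move of $M$ is needed.

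The argument for (2), however, is wrong as written. The ``obstacle'' comes from an arithmetic slip. By \eqref{omegaprop2}, $\omega(\alpha,-\alpha)=\omega(\alpha,\alpha)=\pm1$, not $1$; it equals $-1$ for $\alpha=01$ in the $\mathbb{Z}_2^2$ examples. With the correct factor, your first route reads $\eta^\mu_{\alpha_i,\beta_j}=\omega(\alpha,\alpha)\omega(\beta,\mu)\,\eta^\mu_{\beta_j,\alpha_i}=\omega(\alpha,\alpha)\omega(\beta,\beta)\omega(\beta,\alpha)\,\eta^\mu_{\beta_j,\alpha_i}$. The inverse of this factor is exactly $\omega(\mu,\mu)\omega(\alpha,\beta)$, so there was nothing to get around.

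The workaround you adopt instead only appears to work because two errors cancel. First, your own relation $MA=\omega(\alpha,\mu)AM$ gives $AM=\omega(\mu,\alpha)MA$, not $\omega(\alpha,\mu)MA$. Compare $BM=\omega(\mu,\beta)MB$, which you use correctly in (3). Second, you match a relation of the form $\eta^\mu_{\alpha_i,\beta_j}=c\,\eta^\mu_{\beta_j,\alpha_i}$ against (2), which has the form $\eta^\mu_{\beta_j,\alpha_i}=c\,\eta^\mu_{\alpha_i,\beta_j}$, without inverting $c$. You did invert in your first attempt. The identity you actually derive, $\eta^\mu_{\alpha_i,\beta_j}=\omega(\mu,\mu)\omega(\alpha,\beta)\,\eta^\mu_{\beta_j,\alpha_i}$, differs from (2) by a factor $\omega(\alpha,\beta)^2$ and is false for non-symmetric $\omega$. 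For instance, in Section~\ref{SEC:sl2} take $\alpha=12$, $\beta=02$, $\mu=11$. Then $\omega(\alpha,\beta)=\xi^2$, $\eta^{11}(E_+^{12},E_-^{02})=1/2$ and $\eta^{11}(E_-^{02},E_+^{12})=\xi^2/2$, so your identity would demand $1/2=\xi^2\cdot\xi^2/2=\xi/2$.

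Done correctly, your second route gives $\operatorname{ctr}(AMB)=\omega(\beta,\beta)\omega(\mu,\alpha)\operatorname{ctr}(BMA)=\omega(\mu,\mu)\omega(\beta,\alpha)\operatorname{ctr}(BMA)$, which is (2) after inversion. The paper avoids the inversion altogether. It starts from $\operatorname{ctr}(BMA)$, cycles $B$ to the end, then moves $M$ past $A$, which yields the factor $\omega(\beta,-\beta)\omega(-\mu,\alpha)=\omega(\mu,\mu)\omega(\alpha,\beta)$ directly.
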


\begin{proof}
	To simplify the notations, we set $ A := \rho(X_{\alpha_i})$, $B := \rho(X_{\beta_j})$, $C := \rho(X_{\gamma_k})$.

(1) By the definition of the color trace, $\operatorname{ctr} X = 0 $ if $\deg X \neq 0.$
		The statement immediately follows from $ \deg (AM^{-\mu}B) = \alpha + \beta - \mu$.

(2) The quantity $\eta^{\mu}_{\beta_j,\alpha_i} = \operatorname{ctr}(B M^{-\mu} A) $ is computed, using \eqref{ctrperm} and $ \llbracket M^{-\mu}, A \rrbracket = 0, $ as follows:
		\begin{align*}
			\eta^{\mu}_{\beta_j,\alpha_i} &
			\stackrel{\eqref{ctrperm}}{=} \omega(\beta,-\mu+\alpha)\, \operatorname{ctr}(M^{-\mu} A B)
			= \omega(\beta,-\mu+\alpha) \omega(-\mu,\alpha)\, \operatorname{ctr}(A M^{-\mu} B)
			\nn \\
			&\, \, = \omega(\beta,-\mu+\alpha) \omega(-\mu,\alpha)\, \eta^{\mu}_{\alpha_i, \beta_j}.
		\end{align*}
		Recalling that $ \eta^{\mu}_{\alpha_i, \beta_j} \neq 0 $ only if $ \alpha+\beta = \mu,$
		it is straightforward to verify the identity
\[
\omega(\beta,-\mu+\alpha) \omega(-\mu,\alpha) = \omega(\mu,\mu) \omega(\alpha,\beta).
\]

(3) By definition, we can write
		\begin{align*}
			\eta^{\mu}(\llbracket X_{\alpha_i}, X_{\beta_j} \rrbracket, X_{\gamma_k} ) =
			\operatorname{ctr}(\llbracket A, B \rrbracket M^{-\mu}C)
			=\operatorname{ctr}\big( (AB - \omega(\alpha,\beta) BA) M^{-\mu}C \big).
		\end{align*}
		Using \eqref{ctrperm} and \eqref{omegaprop2}, we obtain
		\begin{align*}
			\eta^{\mu}(\llbracket X_{\alpha_i}, X_{\beta_j} \rrbracket, X_{\gamma_k} )
			&= \operatorname{ctr}\big( \omega(\beta,-\mu) A M^{-\mu} BC - \omega(\alpha,\beta) \omega(\beta,\alpha-\mu+\gamma) A M^{-\mu}CB \big)
			\nn \\
			&= \omega(\mu,\beta) \operatorname{ctr}\big( A M^{-\mu} (BC - \omega(\beta,\gamma) CB)) \big)
			\nn \\
			&= \omega(\mu,\beta) \operatorname{ctr} (A M^{-\mu} \llbracket B, C \rrbracket) = \omega(\mu,\beta) \eta^{\mu}(X_{\alpha_i}, \llbracket X_{\beta_j}, X_{\gamma_k} \rrbracket).
		\end{align*}
This completes the proof.
\end{proof}

In terms of the structure constants $f_{\alpha_i, \beta_j}^{\quad\ \gamma_k}$, the $\g$-invariance of $\eta^{\mu}$ takes the following form:
\begin{equation}
	\sum_{\nu_{\ell}}
	f_{\alpha_i \beta_j}^{\quad \ \nu_{\ell}} \eta^{\mu}_{\nu_{\ell}\gamma_k}
	=
	\omega(\mu,\beta)
	\sum_{\nu_{\ell}}
	f_{\beta_j \gamma_k}^{\quad \ \nu_{\ell}} \eta^{\mu}_{\alpha_i \nu_{\ell}},
	\label{Invf}
\end{equation}
where we continue to use the shorthand notation introduced in~\eqref{sumgammak}.
If $\eta^{\mu}$ is nondegenerate, we introduce its inverse and denote it by $ \eta_{\mu}$:
\begin{align*}
	 &\eta_{\mu}^{\alpha_i \beta_j}=\eta_{\mu}(X_{\alpha_i}, X_{\beta_j}) = 0,\qquad \alpha + \beta \neq \mu,
	\qquad
	 \sum_{\gamma_k } \eta_{\mu}^{\alpha_i \gamma_k} \eta^{\mu}_{\gamma_k \beta_j}=\sum_{\gamma_k}
	\eta^{\mu}_{\beta_j \gamma_k} \eta_{\mu}^{\gamma_k \alpha_i} = \delta^{\alpha_i}_{\beta_j}.
\end{align*}
An equivalent expression to \eqref{Invf} in terms of $\eta_{\mu}$ is given by
\begin{equation}
	\sum_{\nu_{\ell}}
	\eta_{\mu}^{\alpha_i \nu_{\ell}} f_{\nu_{\ell} \beta_j}^{\quad \ \gamma_k}
	= \omega(\mu,\beta)
	\sum_{\nu_{\ell}}
	\eta_{\mu}^{\nu_{\ell} \gamma_k} f_{\beta_j \nu_{\ell}}^{\quad\ \alpha_i}.
	\label{Invf2}
\end{equation}

\begin{Theorem}
	A second order Casimir element of degree $\mu$ is given by
	\begin{equation*}
		C_{\mu} = \sum_{\alpha_i,\beta_j} \eta_{\mu}^{\alpha_i \beta_j} X_{\alpha_i} X_{\beta_j}.
	\end{equation*}
\end{Theorem}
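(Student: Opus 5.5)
The plan is to check directly that $C_{\mu}$ graded-commutes with every basis element $X_{\gamma_k}$ of $\g$, and then extend this to all of $U(\g)$.

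First, $C_{\mu}$ is homogeneous of degree $\mu$, since $\eta_{\mu}^{\alpha_i\beta_j}=0$ unless $\alpha+\beta=\mu$. Next, $U(\g)$ is generated by $\g$, and $\omega$ is bimultiplicative by \eqref{omegaprop1}. Hence if $C_{\mu}Y=\omega(\mu,\deg Y)\,YC_{\mu}$ holds for $Y=X_{\gamma_k}$, it propagates to products: for example $C_{\mu}YZ=\omega(\mu,\deg Y+\deg Z)\,YZC_{\mu}$. So it suffices to prove
\[
C_{\mu}X_{\gamma_k}-\omega(\mu,\gamma)X_{\gamma_k}C_{\mu}=0 .
\]

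To do this, I move $X_{\gamma_k}$ to the left through $X_{\alpha_i}X_{\beta_j}$ using the defining relation $XY=\omega(\deg X,\deg Y)YX+\llbracket X,Y\rrbracket$ of $U(\g)$. This gives
\[
X_{\alpha_i}X_{\beta_j}X_{\gamma_k}=\omega(\alpha+\beta,\gamma)X_{\gamma_k}X_{\alpha_i}X_{\beta_j}+\omega(\beta,\gamma)\llbracket X_{\alpha_i},X_{\gamma_k}\rrbracket X_{\beta_j}+X_{\alpha_i}\llbracket X_{\beta_j},X_{\gamma_k}\rrbracket .
\]
On the support of $\eta_{\mu}$ we have $\omega(\alpha+\beta,\gamma)=\omega(\mu,\gamma)$, so the cubic term cancels exactly against $\omega(\mu,\gamma)X_{\gamma_k}C_{\mu}$. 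What remains is the quadratic expression
\[
\sum_{\alpha_i,\beta_j,\nu_\ell}\eta_{\mu}^{\alpha_i\beta_j}\Bigl(\omega(\beta,\gamma)f_{\alpha_i\gamma_k}^{\quad\ \nu_\ell}X_{\nu_\ell}X_{\beta_j}+f_{\beta_j\gamma_k}^{\quad\ \nu_\ell}X_{\alpha_i}X_{\nu_\ell}\Bigr).
\]
I relabel the summation indices so that both sums run over monomials $X_{\nu_\ell}X_{\beta_j}$. The task is then to show that, for each fixed pair $(\nu_\ell,\beta_j)$, the coefficient vanishes:
\[
\omega(\beta,\gamma)\sum_{\alpha_i}\eta_{\mu}^{\alpha_i\beta_j}f_{\alpha_i\gamma_k}^{\quad\ \nu_\ell}+\sum_{\alpha_i}\eta_{\mu}^{\nu_\ell\alpha_i}f_{\alpha_i\gamma_k}^{\quad\ \beta_j}=0 .
\]
This is not a claim that the monomials are independent in $U(\g)$; it is stronger, namely that the coefficient itself is zero.

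To prove this coefficient identity I would combine \eqref{Invf2}, \eqref{fskew}, and the symmetry property (2) of Proposition~\ref{PROP:BF}, transported to the inverse form as $\eta_{\mu}^{\beta_j\alpha_i}=\omega(\mu,\mu)\omega(\alpha,\beta)\eta_{\mu}^{\alpha_i\beta_j}$. The steps are:
\begin{itemize}
\item Rewrite $f_{\alpha_i\gamma_k}^{\quad\ \nu_\ell}=-\omega(\alpha,\gamma)f_{\gamma_k\alpha_i}^{\quad\ \nu_\ell}$ by \eqref{fskew}.
\item Swap the indices of $\eta_{\mu}$ in the first sum using the symmetry above.
\item Apply \eqref{Invf2} with its middle index taken to be $\gamma_k$. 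This converts $\sum_{\alpha_i}\eta_{\mu}^{\beta_j\alpha_i}f_{\alpha_i\gamma_k}^{\quad\ \nu_\ell}$ into $\omega(\mu,\gamma)\sum_{\alpha_i}\eta_{\mu}^{\alpha_i\nu_\ell}f_{\gamma_k\alpha_i}^{\quad\ \beta_j}$.
\item Use \eqref{fskew} and the $\eta_{\mu}$-symmetry once more to bring this into the shape of the second sum.
\end{itemize}

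The main obstacle is the bookkeeping of the $\omega$ factors. Every nonzero term obeys degree constraints: $\alpha+\beta=\mu$ for $\eta_{\mu}^{\alpha_i\beta_j}$, and $\alpha+\gamma=\nu$ for $f_{\alpha_i\gamma_k}^{\quad\ \nu_\ell}$. Using these together with \eqref{omegaprop2} (for instance $\omega(\alpha,-\beta)=\omega(\beta,\alpha)$ and $\omega(\alpha,\alpha)^2=1$), one has to check that the accumulated product of $\omega$'s reduces to exactly $-1$ relative to the second sum. I would verify this by expressing every degree in terms of the independent quantities $\mu$, $\beta$, $\gamma$ and expanding by bimultiplicativity. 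If the signs fail to match, the likely culprit is the order of indices in the transported symmetry of $\eta_{\mu}$, which I would then recheck from $\sum_{\gamma_k}\eta_{\mu}^{\alpha_i\gamma_k}\eta^{\mu}_{\gamma_k\beta_j}=\delta^{\alpha_i}_{\beta_j}$.
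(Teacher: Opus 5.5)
Your proposal is correct and is essentially the paper's proof. The paper expands $\llbracket X_{\alpha_i},C_{\mu}\rrbracket$ by the derivation rule, which is your reordering computation up to the factor $-\omega(\mu,\gamma)$, then relabels and kills each coefficient using \eqref{fskew} and \eqref{Invf2}. You do not need the symmetry of $\eta_{\mu}$ at all: after your first bullet the first sum is $-\omega(\beta,\gamma)\omega(\mu-\beta,\gamma)\sum_{\alpha_i}\eta_{\mu}^{\alpha_i\beta_j}f_{\gamma_k\alpha_i}^{\quad\ \nu_\ell}$, which already has the shape of the right-hand side of \eqref{Invf2}, so a single application turns it into $-\omega(\beta,\gamma)\omega(\mu-\beta,\gamma)\omega(\gamma,\mu)\sum_{\alpha_i}\eta_{\mu}^{\nu_\ell\alpha_i}f_{\alpha_i\gamma_k}^{\quad\ \beta_j}$ with prefactor exactly $-1$ (this also removes the mismatch between your first and third bullets, which act on $f_{\gamma_k\alpha_i}$ and $f_{\alpha_i\gamma_k}$ respectively).
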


\begin{proof}
	It is sufficient to prove $ \llbracket X_{\alpha_i}, C_{\mu} \rrbracket = 0. $
	It is proved by straightforward computation:
	\begin{align*}
		\llbracket X_{\alpha_i}, C_{\mu} \rrbracket &= \llbracket X_{\alpha_i}, \sum_{\beta_j,\gamma_k} \eta_{\mu}^{\beta_j \gamma_k} X_{\beta_j} X_{\gamma_k} \rrbracket
		\nn\\
		&= \sum_{\beta_j,\gamma_k,\nu_{\ell}} \eta_{\mu}^{\beta_j \gamma_k}
		\big( f_{\alpha_i \beta_j}^{\quad \ \nu_{\ell}} X_{\nu_{\ell}} X_{\gamma_k} + \omega(\alpha,\beta) f_{\alpha_i \gamma_k}^{\quad\ \nu_{\ell}} X_{\beta_j} X_{\nu_{\ell}} \big)
		\nn\\
		&=
		\sum_{\beta_j,\gamma_k,\nu_{\ell}}
		\big( \eta_{\mu}^{\beta_j \gamma_k} f_{\alpha_i \beta_j}^{\quad \ \nu_{\ell}} + \omega(\alpha,\nu) \eta_{\mu}^{\nu_{\ell} \beta_j} f_{\alpha_i \beta_j}^{\quad \ \gamma_k} \big)
		X_{\nu_{\ell}} X_{\gamma_k}
		\nn \\
		&\stackrel{\eqref{fskew}}{=}
		\sum_{\beta_j,\gamma_k,\nu_{\ell}}
		\big( \eta_{\mu}^{\beta_j \gamma_k} f_{\alpha_i \beta_j}^{\quad \ \nu_{\ell}} - \omega(\alpha,\nu) \omega(\alpha,\beta)\, \eta_{\mu}^{\nu_{\ell} \beta_j} f_{\beta_j \alpha_i}^{\quad \ \gamma_k} \big)
		X_{\nu_{\ell}} X_{\gamma_k}
		\nn\\
		&\stackrel{\eqref{Invf2}}{=}
		\sum_{\beta_j,\gamma_k,\nu_{\ell}} \eta_{\mu}^{\nu_{\ell} \beta_j} f_{\beta_j \alpha_i}^{\quad \ \gamma_k} \big( \omega(\alpha,\mu) - \omega(\alpha,\nu+\beta) \big)
		X_{\nu_{\ell}} X_{\gamma_k}
		 \stackrel{\nu+\beta=\mu}{=} 0.\tag*{\qed}
	\end{align*}\renewcommand{\qed}{}
\end{proof}

The essence of this construction of graded Casimir elements lies in a commutant of nontrivial degree.
Thus, the construction depends on the representation $\rho$, but it does not depend on the choice of basis of $\g$.
Such commutants have been found for low-dimensional color Lie algebras with $\Gamma = \Z2$ \cite{AFITTsuper,AIKTTslint,AiSe}.

\subsection{Loop extension of color Lie algebras and graded central extensions}

In this subsection, we discuss graded central extensions of the loop color algebra (see Definition~\ref{DEF:gcenter} for the graded center).
The loop algebra associated with a color Lie algebra $\g$ is defined as $ L(\g) = \g \otimes \mathbb{C}[\lambda,\lambda^{-1}]$, where $\lambda$ is a spectral parameter of degree zero.
Elements of $L(\g)$ have the form~$ X \otimes \lambda^n$, where $ X \in \g$ and $ n \in \mathbb{Z}$. The color Lie bracket is defined by
\begin{equation*}
	\llbracket X\otimes \lambda^m, Y \otimes \lambda^n \rrbracket = \llbracket X, Y \rrbracket \otimes \lambda^{m+n}.
\end{equation*}
The algebra $L(\g)$ inherits a natural $\Gamma$-grading from that of $\g$.

For the ordered basis $ X_{\alpha_i} $ of $\g$, we set $ X_{\alpha_i}^{(m)} := X_{\alpha_i} \otimes \lambda^{m}$. Then the set $ \bigr\{ X_{\alpha_i}^{(m)}\bigl\}$ forms a~basis of $L(\g), $
and the defining relations of $L(\g)$ take the form of
\begin{equation*}
	\bigl\llbracket X_{\alpha_i}^{(m)}, X_{\beta_j}^{(n)} \bigr\rrbracket = \sum_{\gamma_k} f_{\alpha_i \beta_j}^{\quad\ \gamma_k} X_{\gamma_k}^{(m+n)},
	\qquad m, n \in \mathbb{Z}.
\end{equation*}

\begin{Theorem} \label{THM:centralext}
	Following the notation of the previous subsection, a graded central extension of $L(\g)$ is given by
	\begin{align}
		\bigl\llbracket X_{\alpha_i}^{(m)}, X_{\beta_j}^{(n)}\bigr \rrbracket = \sum_{\gamma_k} f_{\alpha_i \beta_j}^{\quad\ \gamma_k} X_{\gamma_k}^{(m+n)}
		+ \sum_{\mu\in \Gamma} m \delta_{n+m,0} \omega(\alpha,\mu) \eta^{\mu}_{\alpha_i \beta_j} c_{\mu},
		\label{comWc}
	\end{align}
	where $ c_{\mu}$ is a graded center of degree $\mu$:
	\begin{equation*}
		\bigl\llbracket c_{\mu}, X_{\alpha_i}^{(m)} \bigr\rrbracket = \llbracket c_{\mu}, c_{\nu} \rrbracket = 0, \qquad \forall X_{\alpha}^{(m)} \in L(\g), \quad \forall \mu, \nu \in \Gamma.
	\end{equation*}
\end{Theorem}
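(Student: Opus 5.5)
To prove that \eqref{comWc} defines a color Lie algebra, I would check two things: that the extended bracket respects color skew-symmetry, and that it satisfies the color Jacobi identity \eqref{colorJacobi}. Since the $c_\mu$ are declared graded-central, both checks reduce to statements about the cocycle
\[
\psi(X_{\alpha_i}^{(m)},X_{\beta_j}^{(n)}) := \sum_{\mu} m\,\delta_{m+n,0}\,\omega(\alpha,\mu)\,\eta^{\mu}_{\alpha_i\beta_j}\, c_\mu .
\]
For this to make sense, $c_\mu$ must carry degree $\mu$. Proposition~\ref{PROP:BF}(1) gives $\eta^\mu_{\alpha_i\beta_j}=0$ unless $\alpha+\beta=\mu$, so the bracket stays of degree zero.

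\textbf{Skew-symmetry.} I need
\[
\psi(X_{\beta_j}^{(n)},X_{\alpha_i}^{(m)}) = -\omega(\beta,\alpha)\,\psi(X_{\alpha_i}^{(m)},X_{\beta_j}^{(n)}).
\]
With $n=-m$, the left side equals $-m\,\omega(\beta,\mu)\,\eta^\mu_{\beta_j\alpha_i}$. Proposition~\ref{PROP:BF}(2) turns this into $-m\,\omega(\beta,\mu)\,\omega(\mu,\mu)\,\omega(\alpha,\beta)\,\eta^\mu_{\alpha_i\beta_j}$. The required identity is then
\[
\omega(\beta,\mu)\,\omega(\mu,\mu)\,\omega(\alpha,\beta) = \omega(\beta,\alpha)\,\omega(\alpha,\mu).
\]
To verify it, substitute $\mu=\alpha+\beta$ and expand bimultiplicatively. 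The left side becomes $\omega(\beta,\alpha)\omega(\beta,\beta)\omega(\alpha,\alpha)\omega(\alpha,\beta)\omega(\beta,\alpha)\omega(\beta,\beta)\omega(\alpha,\beta)$, which simplifies to $\omega(\alpha,\alpha)\omega(\alpha,\beta)$ using $\omega(\beta,\beta)^2=1$ and $\omega(\alpha,\beta)\omega(\beta,\alpha)=1$. The right side is $\omega(\beta,\alpha)\omega(\alpha,\alpha)\omega(\alpha,\beta)$, which is also $\omega(\alpha,\alpha)\omega(\alpha,\beta)$ after the same cancellation. So the two sides agree.

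\textbf{Jacobi identity.} The brackets of $L(\g)$ itself already satisfy Jacobi, and $c_\mu$ brackets to zero with everything. The central part of the Jacobi identity for $X=X_{\alpha_i}^{(m)}$, $Y=X_{\beta_j}^{(n)}$, $Z=X_{\gamma_k}^{(p)}$ is therefore
\[
\psi(X,\llbracket Y,Z\rrbracket) = \psi(\llbracket X,Y\rrbracket,Z) + \omega(\alpha,\beta)\,\psi(Y,\llbracket X,Z\rrbracket),
\]
with all three terms supported on $m+n+p=0$. Writing each term via $\eta^\mu$ and the loop indices, this becomes
\[
m\,\omega(\alpha,\mu)\,\eta^\mu(X,\llbracket Y,Z\rrbracket) = (m+n)\,\omega(\alpha+\beta,\mu)\,\eta^\mu(\llbracket X,Y\rrbracket,Z) + \omega(\alpha,\beta)\, n\,\omega(\beta,\mu)\,\eta^\mu(Y,\llbracket X,Z\rrbracket).
\]
I would rewrite all three $\eta$-values in terms of the single quantity $\eta^\mu(X,\llbracket Y,Z\rrbracket)$:
\begin{itemize}
\item Proposition~\ref{PROP:BF}(3) gives $\eta^\mu(\llbracket X,Y\rrbracket,Z)=\omega(\mu,\beta)\,\eta^\mu(X,\llbracket Y,Z\rrbracket)$.
\item For $\eta^\mu(Y,\llbracket X,Z\rrbracket)$, I would first use the skew-symmetry $\llbracket X,Z\rrbracket=-\omega(\alpha,\gamma)\llbracket Z,X\rrbracket$, so that (3) applies with the bracket on the right. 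Then I would move the bracket to the left via (3), obtaining $\eta^\mu(\llbracket Y,Z\rrbracket,X)$. Finally, Proposition~\ref{PROP:BF}(2) swaps the arguments back to $\eta^\mu(X,\llbracket Y,Z\rrbracket)$, now with the factor $\omega(\mu,\mu)\,\omega(\beta+\gamma,\alpha)$.
\end{itemize}

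\textbf{Main obstacle.} The hardest step is the bookkeeping of the $\omega$-factors in this Jacobi check. After the reductions above, the coefficients multiplying $m$ and $n$ must separately cancel, using the constraint $\alpha+\beta+\gamma=\mu$ together with $\omega(\mu,\mu)=\pm1$. I expect the $n$-terms to cancel between the second and third summands, and the $m$-terms to match between the first and second. I would verify this by expressing every factor through the bimultiplicative rules \eqref{omegaprop1}–\eqref{omegaprop2}, with $\gamma$ eliminated in favour of $\mu-\alpha-\beta$.
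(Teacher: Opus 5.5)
Your plan is correct and is essentially the paper's argument: keep only the central terms, use the invariance property (3) of Proposition~\ref{PROP:BF} together with color skew-symmetry to rewrite all three Jacobi terms through a single $\eta^\mu$-value, and observe that the $m$- and $n$-coefficients cancel. Your separate check that the cocycle respects color skew-symmetry, which the paper leaves implicit, is a worthwhile addition.

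There are two small corrections. In the skew-symmetry check both sides reduce to $\omega(\alpha,\alpha)$, not $\omega(\alpha,\alpha)\omega(\alpha,\beta)$; the conclusion is unaffected. In the Jacobi step the relation you need is $\eta^\mu(\llbracket Y,Z\rrbracket,X)=\omega(\mu,\mu)\,\omega(\alpha,\beta+\gamma)\,\eta^\mu(X,\llbracket Y,Z\rrbracket)$. Your factor $\omega(\mu,\mu)\,\omega(\beta+\gamma,\alpha)$ belongs to the inverse direction, and using it the wrong way would spoil the $n$-cancellation whenever $\omega(\alpha,\beta+\gamma)^2\neq 1$, e.g.\ for $\mathbb{Z}_3^2$. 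You can also skip property (2) entirely, as the paper does: apply (3) to $\llbracket Y,X\rrbracket=-\omega(\beta,\alpha)\llbracket X,Y\rrbracket$ to get $\eta^\mu(Y,\llbracket X,Z\rrbracket)=-\omega(\alpha,\mu)\,\omega(\beta,\alpha)\,\omega(\mu,\beta)\,\eta^\mu(X,\llbracket Y,Z\rrbracket)$. The cancellation then follows from bimultiplicativity alone.
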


\begin{proof}
	We prove that the relation \eqref{comWc} satisfies the color Jacobi identity \eqref{colorJacobi}.
	\begin{align*}
		\llbracket X_{\alpha_i}^{(m)}, \llbracket X_{\beta_j}^{(n)} X_{\gamma_k}^{(\ell)} \rrbracket \rrbracket
		 = \sum_{\nu_p} f_{\beta_j,\gamma_k}^{\quad \ \nu_p}
		\biggl( \sum_{\sigma_q} f_{\alpha_i \nu_p}^{\quad\ \sigma_q} X_{\sigma_q}^{(m+n+\ell)} + \sum_{\mu \in \Gamma} m \delta_{m+n+\ell,0} \omega(\alpha,\mu) \eta_{\alpha_i,\nu_p}^{\mu} c_{\mu} \biggr).
	\end{align*}
	The first term has the same structure as that of $L(\g)$, hence it satisfies the color Jacobi identity.
	Therefore, it suffices to keep only the graded central terms;
	we shall denote the central term of an expression by $(\ldots)_{\mathrm{ct}}$.
	By the $\g$-invariance of $\eta^{\mu} $ \eqref{Invf} and the fact that the structure constants are nonvanishing only when $ \alpha + \beta = \nu, $ this takes the form
	\begin{equation*}
		\bigl(\bigl\llbracket X_{\alpha_i}^{(m)}, \bigl\llbracket X_{\beta_j}^{(n)} X_{\gamma_k}^{(\ell)} \bigr\rrbracket \bigr\rrbracket \bigr)_{\mathrm{ct}}
		= m \delta_{m+n+\ell,0} \sum_{\nu_p,\mu} \omega(\nu,\mu) f_{\alpha_i \beta_j}^{\quad \ \nu_p} \eta_{\nu_p \gamma_k}^{\mu} c_{\mu}.
	\end{equation*} 	
	Similarly, we obtain the following expressions (keeping only the graded central terms):
	\begin{align*}
\bigl(\bigl\llbracket \bigl\llbracket X_{\alpha_i}^{(m)}, X_{\beta_j}^{(n)} \bigr\rrbracket, X_{\gamma_k}^{(\ell)} \bigr\rrbracket \bigr)_{\mathrm{ct}}
		&= (m+n) \delta_{m+n+\ell,0} \sum_{\nu_p,\mu} \omega(\nu,\mu) f_{\alpha_i \beta_j}^{\quad \ \nu_p} \eta_{\nu_p \gamma_k}^{\mu} c_{\mu},
	\end{align*}
	and
	\begin{align*}
\bigl(\bigl\llbracket X_{\beta_j}^{(n)}, \bigl\llbracket X_{\alpha_i}^{(m)}, X_{\gamma_k}^{(\ell)} \bigr\rrbracket \bigr\rrbracket\bigr)_{\mathrm{ct}}
		&= n \delta_{m+n+\ell,0}\sum_{\nu_p,\mu} \omega(\beta,\mu)f_{\alpha_i \gamma_k}^{\quad \ \nu_p} \eta_{\beta_j \nu_p}^{\mu} c_{\mu}
		\nn\\
		&\stackrel{\eqref{Invf}}{=} n \delta_{m+n+\ell,0}\sum_{\nu_p,\mu} \omega(\beta,\mu) \omega(\alpha,\mu) f_{\beta_j \alpha_i }^{\quad \ \nu_p} \eta_{\nu_p \gamma_k}^{\mu} c_{\mu}
		\nn\\
		&\stackrel{\eqref{fskew}}{=} -n \delta_{m+n+\ell,0}\sum_{\nu_p,\mu} \omega(\alpha+\beta,\mu) \omega(\beta,\alpha) f_{\alpha_i \beta_j }^{\quad \ \nu_p} \eta_{\nu_p \gamma_k}^{\mu} c_{\mu}.
	\end{align*}
	These relations show that the color Jacobi identity is satisfied.
\end{proof}

The graded central extension has been discussed for particular $\Gamma = \Z2$-graded color loop algebras $L(\sl(2))$ and $L(\osp(1|2))$ and the existence of a graded central element of degree ${(1,1) \in \Z2}$ has been observed in \cite{AIKTTslint,AiSe}.
A $\Z2$-graded extension of the Virasoro algebra was also discussed in these works in relation to integrable $\Z2$-graded extensions of the Liouville equation and the Sugawara construction. Furthermore, a $\Z2$-graded extension of the super-Virasoro algebra was studied in connection with an integrable $\Z2$-super-Liouville equation~\cite{AFITTsuper}.

\section{Color algebras having nontrivial commutant} \label{SEC:Eg1}

As mentioned in the previous section, the commutant of degree $0$ is trivial, i.e., it is given by an identity matrix.
In this and the next sections, we present color Lie algebras which have commutants of nontrivial degree and explicit formulas of their 2nd order Casimir elements.

\subsection[Z\_2\^{}2-graded extension of q(n)]{$\boldsymbol{\Z2}$-graded extension of $\boldsymbol{\q(n)}$} \label{SEC:qn}

In this subsection, we introduce a $\Gamma = \Z2$-graded extension of the strange Lie superalgebra $\q(n)$ and its affine extension.

Let us recall that the Lie superalgebra $\q(n)$ can be determined through its defining representation, i.e.\
\begin{equation*}
	\q(n) = \left\{
	\left( \begin{matrix} A&B\\B&A \end{matrix} \right) \mid
	A,B\in \gl(n) \right\},
\end{equation*}
where the matrices with $B=0$ are even, or elements of $\q(n)_{0}$, and those
with $A=0$ odd, or elements of $\q(n)_{1}$.
With $e_{ij}$ the notation for the $n\times n$ matrix unit,
a basis for $\q(n)$ is given by%
\begin{align}
	&E_{ij}^0 = \left( \begin{matrix} 1&0\\0&1 \end{matrix} \right)\otimes e_{ij},\qquad
	E_{ij}^1 = \left( \begin{matrix} 0&1\\1&0 \end{matrix} \right)\otimes e_{ij},\qquad
	i,j\in\{1,\ldots,n\},
	\nn\\
	&E_{ij}^{\alpha} \in \q(n)_{\alpha}, \qquad \alpha \in \mathbb{Z}_2.
	\label{qnbasis}
\end{align}
Note that the $ 2\times 2$ matrices in \eqref{qnbasis} form a $\mathbb{Z}_2$-graded algebra.

We extend the representation \eqref{qnbasis} to $\Z2$-graded setting.
Let $\Gamma=\Z2=\{00,01,10,11\}$ and
$\omega(\alpha,\beta)=(-1)^{\alpha_1\beta_1+\alpha_2\beta_2}$ for $\alpha = \alpha_1\alpha_2$, $\beta =\beta_1\beta_2 \in\Gamma$.
We introduce a $\Z2$-graded color algebra defined by the matrices
\begin{align}
	&
	\Theta^{00}= \left(\begin{matrix}
		1 & 0 & 0 & 0
		\\
		0 & 1 & 0 & 0
		\\
		0 & 0 & 1 & 0
		\\
		0 & 0 & 0 & 1
	\end{matrix}\right), \qquad
	\Theta^{10}=\left(\begin{matrix}
		0 & 0 & 1 & 0
		\\
		0 & 0 & 0 & 1
		\\
		1 & 0 & 0 & 0
		\\
		0 & 1 & 0 & 0
	\end{matrix}\right) \nonumber\\
	&
	\Theta^{01}=\left(\begin{matrix}
		0 & 1 & 0 & 0
		\\
		1 & 0 & 0 & 0
		\\
		0 & 0 & 0 & 1
		\\
		0 & 0 & 1 & 0
	\end{matrix}\right),\qquad
	\Theta^{11}=\left(\begin{matrix}
		0 & 0 & 0 & 1
		\\
		0 & 0 & 1 & 0
		\\
		0 & 1 & 0 & 0
		\\
		1 & 0 & 0 & 0
	\end{matrix}\right),
	\label{Theta}
\end{align}
which satisfy the product relations
\begin{equation}
	\Theta^\alpha\Theta^\beta=\Theta^{\alpha+\beta}.
	\label{OO}
\end{equation}
Consider now the $4n^2$ homogeneous elements
\[
E_{ij}^\alpha = \Theta^\alpha \otimes e_{ij}, \qquad i,j\in\{1,\ldots,n\},\quad \alpha\in\Z2,
\]
which act on the $\Z2$-graded vector space
$ \mathbb{C}^{n,n|n,n} := \mathbb{C}^n \oplus \mathbb{C}^n \oplus \mathbb{C}^n \oplus \mathbb{C}^n.$ These elements form a basis of the $\Z2$-graded extension of $\q(n)$, denoted by \qq, whose
bracket relation is
\begin{equation*}
	\bigl\llbracket E_{ij}^\alpha, E_{kl}^\beta \bigr\rrbracket =
	\delta_{jk} E_{il}^{\alpha+\beta} - \omega(\alpha,\beta) \delta_{il}E_{kj}^{\alpha+\beta}.
\end{equation*}
The color Lie algebra \qq \ can be considered as a subalgebra of the color Lie algebra $\gl(n,n|n,n) :=\gl(V,\omega)$
with $\Gamma=\Z2$, $V = \mathbb{C}^{n,n|n,n}$ and $\omega(\alpha,\beta)=(-1)^{\alpha_1\beta_1+\alpha_2\beta_2}$

In order to study commutants, let us define the following matrices:
\begin{alignat*}{3}
	&
	T^{00}= \left(\begin{matrix}
		1 & 0 & 0 & 0
		\\
		0 & 1 & 0 & 0
		\\
		0 & 0 & 1 & 0
		\\
		0 & 0 & 0 & 1
	\end{matrix}\right), \qquad&&
	T^{10}=\left(\begin{matrix}
		0 & 0 & 1 & 0
		\\
		0 & 0 & 0 & 1
		\\
		-1 & 0 & 0 & 0
		\\
		0 & -1 & 0 & 0
	\end{matrix}\right), & \nonumber\\
	&
	T^{01}=\left(\begin{matrix}
		0 & 1 & 0 & 0
		\\
		- 1 & 0 & 0 & 0
		\\
		0 & 0 & 0 & 1
		\\
		0 & 0 & -1 & 0
	\end{matrix}\right),\qquad&&
	T^{11}=\left(\begin{matrix}
		0 & 0 & 0 & 1
		\\
		0 & 0 & -1 & 0
		\\
		0 & -1 & 0 & 0
		\\
		1 & 0 & 0 & 0
	\end{matrix}\right).&
\end{alignat*}
Then one can verify that
\begin{equation}
	\Theta^\alpha T^\beta = \omega(\alpha,\beta) T^\beta \Theta^\alpha.
	\label{OT}
\end{equation}
Now define the following elements of $\gl(n,n|n,n)$:
\begin{equation*}
	M^\alpha = T^\alpha\otimes I_n.
\end{equation*}
It is easy to prove that the matrices $M^\alpha$ are commutants in the defining representation of \qq.
Indeed,
\begin{align*}
	\bigl\llbracket M^\alpha, E^\beta_{ij}\bigr\rrbracket & = (T^\alpha\otimes I_n)\bigl(\Theta^\beta\otimes E_{ij}\bigr)
	- \omega(\alpha,\beta) \bigl(\Theta^\beta\otimes E_{ij}\bigr)(T^\alpha\otimes I_n) \nonumber\\
	&= \bigl(T^\alpha\Theta^\beta - \omega(\alpha,\beta) \Theta^\beta T^\alpha\bigr) \otimes E_{ij} =0,
\end{align*}
using~\eqref{OT}.
So we have 4 commutants, one of each degree in $\Gamma$.

Next, we compute the corresponding bilinear forms on \qq, using
\begin{equation*}
	\eta^{-\mu} \bigl(E_{ij}^\alpha, E_{kl}^\beta\bigr)=\operatorname{ctr}\bigl(E_{ij}^\alpha M^\mu E_{kl}^\beta\bigr).
\end{equation*}
For $\mu=00$, one finds
\begin{align*}
	E_{ij}^\alpha M^{00} E_{kl}^\beta =
	E_{ij}^\alpha E_{kl}^\beta=
	\bigl(\Theta^\alpha \Theta^\beta\bigr) \otimes (E_{ij}E_{kl}) =
	  \Theta^{\alpha+\beta} \otimes \delta_{jk}E_{il} = \delta_{jk} E_{il}^{\alpha+\beta}.
\end{align*}
Note that the color trace is equivalent to the supertrace for $\Gamma = \Z2.$
Hence,
\begin{equation*}
	\eta^{00} \bigl(E_{ij}^\alpha, E_{kl}^\beta\bigr)=\operatorname{ctr}\bigl(\delta_{jk} E_{il}^{\alpha+\beta}\bigr) =
	\delta_{jk} \operatorname{ctr}\bigl( \Theta^{\alpha+\beta} \otimes E_{ij} \bigr)= \delta_{jk} \operatorname{ctr}\bigl(\Theta^{\alpha+\beta}\bigr)\operatorname{tr}(E_{ij}) = 0,
\end{equation*}
since for the $4\times 4$ matrices $A$ of type~\eqref{Theta} one has $\operatorname{ctr}(A)=a_{11}-a_{22}-a_{33}+a_{44}$.
This implies that $\eta^{00}=0$, so it is degenerate, and one cannot construct a Casimir of order $00$.
This situation is analogous to that of the Lie superalgebra $\q(n)$, which is known not to admit a second-order Casimir~\cite{Sergeev1983}.

For $\mu=11$, one finds
\begin{equation*}
	E_{ij}^\alpha M^{11} E_{kl}^\beta=
	\bigl(\Theta^\alpha T^{11} \Theta^\beta\bigr) \otimes (E_{ij}E_{kl}).
\end{equation*}
Then
\begin{equation*}
	\operatorname{ctr}\bigl(E_{ij}^\alpha M^{11} E_{kl}^\beta\bigr) = \operatorname{ctr}\bigl(\Theta^\alpha T^{11} \Theta^\beta\bigr) \operatorname{tr}(E_{ij}E_{kl} )
	= \delta_{jk}\delta_{il} \operatorname{ctr}\bigl(\Theta^\alpha T^{11} \Theta^\beta\bigr).
\end{equation*}
Using~\eqref{OT} and \eqref{OO}, one finds
\begin{equation*}
	\operatorname{ctr}\bigl(\Theta^\alpha T^{11} \Theta^\beta\bigr)= \omega(\alpha,11) \operatorname{ctr}\bigl(T^{11} \Theta^\alpha \Theta^\beta\bigr)=
	\omega(\alpha,11) \operatorname{ctr}\bigl(T^{11} \Theta^{\alpha+\beta}\bigr).
\end{equation*}
From the explicit form of $T^{11}$ and the matrices $\Theta^\gamma$, it follows that
$\operatorname{ctr}\bigl(T^{11} \Theta^{\gamma}\bigr)$ is nonzero only for $\gamma=11$, and in that case
$\operatorname{ctr}\bigl(T^{11} \Theta^{11}\bigr)=4$.
Thus we have
\begin{equation*}
	\operatorname{ctr}\bigl(\Theta^\alpha T^{11} \Theta^\beta\bigr)= 4\omega(\alpha,11)\delta_{\alpha+\beta,11},
\end{equation*}
and
\begin{equation*}
	\eta^{11}\bigl(E_{ij}^\alpha, E_{kl}^\beta\bigr)=\operatorname{ctr}\bigl(E_{ij}^\alpha M^{11} E_{kl}^\beta\bigr) = 4 \delta_{jk}\delta_{il}\omega(\alpha,11)\delta_{\alpha+\beta,11}.
\end{equation*}
The factor $4$ does not play a role, so for convenience we redefine
\begin{equation*}
	\eta^{11} \bigl(E_{ij}^\alpha, E_{kl}^\beta\bigr):= \delta_{jk}\delta_{il}\omega(\alpha,11)\delta_{\alpha+\beta,11}.
\end{equation*}
This bilinear form is nondegenerate, and its inverse reads
\begin{equation*}
	\eta_{11} \bigl(E_{ij}^\alpha, E_{kl}^\beta\bigr)= \delta_{jk}\delta_{il}\omega(\alpha,11)\delta_{\alpha+\beta,11}.
\end{equation*}
Thus we can construct a 2nd order Casimir element of degree~$11$:
\begin{align*}
	C_{11} = \sum_{\alpha,\beta}\sum_{ij}\sum_{kl}
	\delta_{jk}\delta_{il}\omega(\alpha,11)\delta_{\alpha+\beta,11} E_{ij}^\alpha E_{kl}^\beta
	 = \sum_{\alpha,i,j} \omega(\alpha,11) E_{ij}^\alpha E_{ji}^{11-\alpha}.
\end{align*}
Explicitly,
\begin{equation*}
	C_{11} = \sum_{ij} \bigl( E_{ij}^{00}E_{ji}^{11} - E_{ij}^{01}E_{ji}^{10} - E_{ij}^{10}E_{ji}^{01} + E_{ij}^{11}E_{ji}^{00} \bigr).
\end{equation*}

Finally, for $\mu=10$ and $\mu=01$, one finds that $\operatorname{ctr}\bigl(E_{ij}^\alpha M^\mu E_{kl}^\beta\bigr)=0$, hence $\eta^{\mu}=0$, and one cannot construct a Casimir of order $10$ or $01$.

Let us now turn to the loop algebra of \qq \ and denote its basis by $ E_{ij}^{\alpha (m)}$, where $ m \in \mathbb{Z}.$
According to Theorem \ref{THM:centralext}, the loop algebra admits a $11$-graded central extension given by the bilinear form $\eta^{11}$:
\begin{equation*}
	\bigl\llbracket E_{ij}^{\alpha(m)}, E_{kl}^{\beta(n)} \bigr\rrbracket =
	\delta_{jk} E_{il}^{\alpha+\beta(m+n)} - \omega(\alpha,\beta) \delta_{il}E_{kj}^{\alpha+\beta(m+n)}
	+ n \delta_{m+n,0} \delta_{\alpha+\beta,11} \delta_{jk} \delta_{il}c_{11}.
\end{equation*}
More explicitly, the relations containing $c_{11} $ are as follows:
\begin{equation*}
	\bigl\llbracket E_{ij}^{\alpha(m)}, E_{ji}^{11-\alpha(-m)} \bigr\rrbracket = E_{ii}^{11(0)} - E_{jj}^{11(0)} - m c_{11},
	\qquad \alpha+\beta = 11.
\end{equation*}

\subsection[Z\_3\^{}2-graded extension of sl(2)]{$\boldsymbol{\mathbb{Z}_3^2}$-graded extension of $\boldsymbol{\sl(2)}$} \label{SEC:sl2}

We consider a color Lie algebra extension of $\sl(2)$ with the grading group
\begin{equation}
	\Gamma = \mathbb{Z}_3^2 := \mathbb{Z}_3 \times \mathbb{Z}_3 =
	\{
	 00, 01, 02, 10, 11, 12, 20, 21, 22 	\}
\end{equation}
and the commutative factor
\begin{equation}
	\omega(\alpha,\beta) = \xi^{\alpha_1 \beta_2 - \alpha_2 \beta_1},
	\qquad
	\xi = e^{2\pi {\rm i} /3}, \qquad
	\alpha, \beta \in \Gamma .
	\label{Z32omega}
\end{equation}
For this setting, the color Lie bracket for some homogeneous elements is neither commutator nor anticommutator, see \eqref{ColorBracket}.

According to \cite{scheu}, we introduce a mapping $ \sigma\colon \Gamma \times \Gamma \to
\mathbb{C}_*$ such that
\begin{align*}
	&\omega(\alpha,\beta) = \sigma(\alpha,\beta) \sigma(\beta,\alpha)^{-1},
	\qquad
	\sigma(\alpha,\beta+\gamma) = \sigma(\alpha,\beta) \sigma(\alpha,\gamma),
	\\
	& \sigma(\alpha+\beta,\gamma) = \sigma(\alpha,\gamma) \sigma(\beta,\gamma).
\end{align*}
For the present $\Gamma$ and $ \omega $ given in \eqref{Z32omega}, the mapping $\sigma $ is given by
\begin{equation*}
	\sigma(\alpha,\beta) = \xi^{\alpha_1 \beta_2}.
\end{equation*}
With the mapping $\sigma$, we introduce a color algebra of dimension 9 whose relations are defined~by%
\begin{equation}
	e^{\alpha} e^{\beta} = \sigma(\alpha,\beta) e^{\alpha+\beta}, \qquad \alpha, \beta \in \Gamma.
	\label{GammaAlg}
\end{equation}
The $\mathbb{Z}_3^2$-graded extension of $\sl(2),$ denoted by \Zsl, is spanned by the following nine homogeneous elements:
\begin{alignat}{3}
	&H^{\alpha}= e^{\alpha} \otimes H, \qquad&& \alpha= 00, 11, 22,&
	\nn \\
	&E_+^{\alpha}= e^{\alpha} \otimes E_+, \qquad&& \alpha= 01, 12, 20,&
	\nn \\
	&E_-^{\alpha}= e^{\alpha} \otimes E_-, \qquad&& \alpha= 02, 21, 10,&
	\label{BasisZ3sl2}
\end{alignat}
where $ H$, $E_{\pm} $ is the basis of the Lie algebra $\sl(2)$ subject to the relations
\begin{equation}
	[H, E_{\pm}] = \pm 2 E_{\pm}, \qquad
	[E_+, E_-] = H. \label{Basisl2}
\end{equation}
The nonvanishing color Lie brackets for the basis \eqref{BasisZ3sl2} can be determined by~\eqref{GammaAlg} and~\eqref{Basisl2}~as
\begin{equation}
	\bigl\llbracket H^{\alpha}, E_{\pm}^{\beta} \bigr\rrbracket = \pm 2 \sigma(\alpha,\beta)E_{\pm}^{\alpha+\beta},
	\qquad
	\bigl\llbracket E_+^{\alpha}, E_-^{\beta} \bigr\rrbracket = \sigma(\alpha,\beta) H^{\alpha+\beta}.
	\label{Z32slrelations}
\end{equation}
\begin{Remark}
	These relations can also be expressed using the commutative factor $\omega(\alpha,\beta)$.
	We change the basis of \Zsl \ as
	\begin{align*}
		\tilde{H}^{22} = \xi H^{22}, \qquad \tilde{E}^{12}_+ = \xi E^{12}_+, \qquad
		\tilde{E}^{21}_- = \xi E^{21}_-,
	\end{align*}
	while keeping the other basis elements unchanged. Then the defining relations \eqref{Z32slrelations} become
	\begin{align*}
		& \bigl\llbracket \tilde{H}^{\alpha}, \tilde{E}_+^{\beta} \bigr\rrbracket = 2 \tilde{E}_+^{\alpha+\beta},
		\qquad
		\bigl\llbracket \tilde{H}^{\alpha}, \tilde{E}_-^{\beta} \bigr\rrbracket = -2\omega(\alpha,\beta) \tilde{E}_-^{\alpha+\beta},
		\qquad
		\bigl\llbracket \tilde{E}_+^{\alpha}, \tilde{E}_-^{\beta} \bigr\rrbracket = \tilde{H}^{\alpha+\beta}.
	\end{align*}
\end{Remark}

Next, to search for the commutants of \Zsl, we consider the representation $(\rho,V)$ of \Zsl \ on a $\mathbb{Z}_3^2$-graded vector space
\smash{$V = \bigoplus_{\alpha \in \mathbb{Z}_3^2} V_\alpha$}.
The basis $\ket{\alpha}\in V_{\alpha}$ is taken to be
\begin{alignat*}{4}
	&\ket{00}= e_{00} \otimes \ket{1}, \qquad&&
	\ket{11}= e_{11} \otimes \ket{1}, \qquad&&
	\ket{22}= e_{22} \otimes \ket{1},&
	\notag \\
	&\ket{02}= e_{02} \otimes \ket{-1}, \qquad &&
	\ket{21}= e_{21} \otimes \ket{-1}, \qquad &&
	\ket{10}= e_{10} \otimes \ket{-1},&
\end{alignat*}
where $\ket{\pm 1}$ is the basis of the fundamental representation of $\sl(2)$:
\begin{equation}
	H \ket{\pm 1} = \pm \ket{\pm 1}, \qquad E_{\pm} \ket{\pm 1} = 0,
	\qquad E_{\pm} \ket{\mp 1} = \ket{\pm 1}.
	\label{sl2Rep}
\end{equation}
The graded subspaces $ V_{12}$, $V_{01}$ and $ V_{20}$ are empty, while the remaining subspaces are one-dimensional.
The action of \eqref{BasisZ3sl2} on $V$ is computed as follows:
\begin{align*}
	&\rho(H^{00}) \ket{11}= (e_{00} \otimes H) (e_{11}\otimes \ket{1})
	= e_{00}e_{11} \otimes H \ket{1}
	\stackrel{\eqref{GammaAlg},\, \eqref{sl2Rep}}{=} e_{11} \otimes \ket{1}
	 = \ket{11}.
\end{align*}

Ordering the basis as
\begin{equation*}
	\ket{00}, \quad \ket{11}, \quad \ket{22}, \quad \ket{02}, \quad \ket{21}, \quad \ket{10},
\end{equation*}
the representation $(\rho,V)$ is given explicitly as follows.
First, $\rho\bigl(H^{00}\bigr)$ is the only diagonal matrix
\[
\rho\bigl(H^{00}\bigr)= \mathrm{diag}(1,1,1,-1,-1,-1),
\]
i.e., the basis vectors $ \ket{00}, \ket{11} $ and $\ket{22}$ are degenerate highest weight vectors of $H^{00}$ with highest weight $+1$, while $ \ket{02}, \ket{21}$ and $\ket{10}$ are degenerate lowest weight vectors of $H^{00}$ with lowest weight $-1$.
The representation matrices for the remaining basis elements are given by
\[
	\rho\bigl(H^{11}\bigr) =
	\begin{pNiceArray}{ccc|ccc}
		0 & 0 & \xi^2 & & &
		\\
		1 & 0 & 0 & & &
		\\
		0 & \xi & 0 & & &
		\\ \hline
		& & & 0 & -\xi & 0
		\\
		& & & 0 & 0 & -1
		\\
		& & & -\xi^2 & 0 & 0
	\end{pNiceArray},
	 \qquad
	\rho\bigl(H^{22}\bigr) =
	\begin{pNiceArray}{ccc|ccc}
		0 & \xi^2 & 0 & & &
		\\
		0 & 0 & \xi & & &
		\\
		1 & 0 & 0 & & &
		\\ \hline
		& & & 0 & 0 & -1
		\\
		& & & -\xi & 0 & 0
		\\
		& & & 0 & -\xi^2 & 0
	\end{pNiceArray},
\]
and
\begin{alignat*}{3}
&	\rho\bigl(E_+^{01}\bigr)=
	\begin{pNiceArray}{ccc|ccc}
		& & & 1 & 0 & 0
		\\
		& & & 0 & 0 & 1
		\\
		& & & 0 & 1 & 0
		\\ \hline
		& & & & &
		\\
		& & & & &
		\\
		& & & & &
	\end{pNiceArray},
	\qquad&&
	\rho\bigl(E_-^{02}\bigr)=
	\begin{pNiceArray}{ccc|ccc}
		& & & & &
		\\
		& & & & &
		\\
		& & & & &
		\\ \hline
		1 & 0 & 0 & &
		\\
		0 & 0 & 1 & &
		\\
		0 & 1 & 0 & &
	\end{pNiceArray},& \\
	&\rho\bigl(E_+^{12}\bigr)=
	\begin{pNiceArray}{ccc|ccc}
		& & & 0 & \xi & 0
		\\
		& & & \xi^2 & 0 & 0
		\\
		& & & 0 & 0 & 1
		\\ \hline
		& & & & &
		\\
		& & & & &
		\\
		& & & & &
	\end{pNiceArray},
	\qquad &&
	\rho\bigl(E_-^{21}\bigr)=
	\begin{pNiceArray}{ccc|ccc}
		& & & & &
		\\
		& & & & &
		\\
		& & & & &
		\\ \hline
		0 & \xi^2 & 0 & &
		\\
		1 & 0 & 0 & &
		\\
		0 & 0 & \xi & &
	\end{pNiceArray},&
	\notag \\
&	\rho\bigl(E_+^{20}\bigr)=
	\begin{pNiceArray}{ccc|ccc}
		& & & 0 & 0 & 1
		\\
		& & & 0 & \xi^2 & 0
		\\
		& & & \xi & 0 & 0
		\\ \hline
		& & & & &
		\\
		& & & & &
		\\
		& & & & &
	\end{pNiceArray},
	\qquad &&
	\rho\bigl(E_-^{10}\bigr)=
	\begin{pNiceArray}{ccc|ccc}
		& & & & &
		\\
		& & & & &
		\\
		& & & & &
		\\ \hline
		0 & 0 & \xi^2 & &
		\\
		0 & \xi & 0 & &
		\\
		1 & 0 & 0 & &
	\end{pNiceArray},&
\end{alignat*}
where an empty block denotes the zero matrix.

With these matrices, the following statements are easily verified by direct computation:
\begin{enumerate}\itemsep=0pt
	\item The commutant $M^{\alpha}$ does not exist for $ \alpha \neq 00, 11, 22$, namely, $M^{\alpha}$ is the zero matrix for these degrees.
	\item For $ \alpha = 00, 11, 22,$ there exists a unique (up to overall constant) commutant $M^{\alpha}$. $M^{00}$ is the identity matrix, while $M^{11}$, $M^{22}$ are given by
\begin{equation*}
	M^{11} =
	\begin{pNiceArray}{ccc|ccc}
		0 & 0 & 1 & &
		\\
		\xi & 0 & 0 & &
		\\
		0 & \xi^2 & 0 & &
		\\ \hline
		& & & 0 & \xi^2 & 0
		\\
		& & & 0 & 0 & \xi
		\\
		& & & 1 & 0 & 0
	\end{pNiceArray},
	\qquad
	M^{22} =
	\begin{pNiceArray}{ccc|ccc}
		0 & 1 & 0 & &
		\\
		0 & 0 & \xi^2 & &
		\\
		\xi & 0 & 0 & &
		\\ \hline
		& & & 0 & 0 & \xi
		\\
		& & & \xi^2 & 0 & 0
		\\
		& & & 0 & 1 & 0
	\end{pNiceArray}.
\end{equation*}
\end{enumerate}

It follows that invariant bilinear forms of degrees $00$, $11$ and $22$ exist.
As in the previous subsection, we compute the normalized invariant bilinear forms:
\begin{align*}
	&\eta^{00}(A,B)= \frac{1}{6}\operatorname{tr}(\rho(A)\rho(B)),\qquad
	\eta^{11}(A,B) = \frac{1}{6}\operatorname{tr}\bigl(\rho(A)M^{22}\rho(B)\bigr),
	\notag \\
&	\eta^{22}(A,B)= \frac{1}{6}\operatorname{tr}\bigl(\rho(A)M^{11}\rho(B)\bigr),\qquad
	A, B \in \mathbb{Z}_3^2\text{-}\sl(2)
\end{align*}
We list the nonvanishing components of $\eta^{\alpha}$ below:
\begin{alignat*}{3}
	&\eta^{00}\bigl(H^{00},H^{00}\bigr)= 1, \qquad&&
	\eta^{00}\bigl(H^{11},H^{22}\bigr)= \eta^{00}\bigl(H^{22},H^{11}\bigr) = \xi^2,&
	\notag \\
	&\eta^{00}\bigl(E_+^{01},E_-^{02}\bigr)= \eta^{00}\bigl(E_-^{02},E_+^{01}\bigr) = \frac{1}{2},
	\qquad &&
	\eta^{00}\bigl(E_+^{20},E_-^{10}\bigr)= \eta^{00}\bigl(E_-^{10},E_+^{20}\bigr) = \frac{1}{2},&
	\notag \\
	&\eta^{00}\bigl(E_+^{12},E_-^{21}\bigr)= \eta^{00}\bigl(E_-^{21},E_+^{12}\bigr) = \frac{\xi}{2}.\qquad &&&
\end{alignat*}
The nonvanishing components of $\eta^{11}$ are
\begin{alignat*}{3}
	&\eta^{11}\bigl(H^{00},H^{11}\bigr)= \eta^{11}\bigl(H^{11},H^{00}\bigr) = 1, \qquad&&
	\eta^{11}\bigl(H^{22},H^{22}\bigr)= \xi,&
	\notag \\
	&\eta^{11}\bigl(E_+^{12},E_-^{02}\bigr)= \eta^{11}\bigl(E_+^{20},E_-^{21}\bigr) = \frac{1}{2},
	\qquad &&
	\eta^{11}\bigl(E_+^{01},E_-^{10}\bigr)= \frac{\xi}{2},&
	\notag \\
	&\eta^{11}\bigl(E_-^{02},E_+^{12}\bigr)= \eta^{11}\bigl(E_-^{21},E_+^{20}\bigr) = \frac{\xi^2}{2}, \qquad &&
	\eta^{11}\bigl(E_-^{10},E_+^{01}\bigr)= \frac{1}{2},&
\end{alignat*}
and those of $\eta^{22}$ are
\begin{alignat*}{3}
	&\eta^{22}\bigl(H^{00}, H^{22}\bigr)= \eta^{22}\bigl(H^{22}, H^{00}\bigr) = 1,
	\qquad&&
	\eta^{22}\bigl(H^{11}, H^{11}\bigr)= \xi,&
	\notag \\
	&\eta^{22}\bigl(E_+^{01}, E_-^{21}\bigr)= \eta^{22}\bigl(E_+^{12}, E_-^{10}\bigr) = \frac{\xi^2}{2},
	\qquad &&
	\eta^{22}\bigl(E_+^{20}, E_-^{02}\bigr)= \frac{1}{2},&
	\notag \\
	&\eta^{22}\bigl(E_-^{21}, E_+^{01}\bigr)= \eta^{22}\bigl(E_-^{10}, E_+^{12}\bigr) = \frac{1}{2},
	\qquad &&
	\eta^{22}\bigl(E_-^{02}, E_+^{20}\bigr)= \frac{\xi}{2}.&
\end{alignat*}

The bilinear forms $\eta^{\alpha}$ are nondegenerate.
Their inverses are easily obtained; only the nonvanishing components are listed below:
\begin{alignat*}{3}
	&\eta_{00}\bigl(H^{00},H^{00}\bigr)= 1, \qquad&&
	\eta_{00}\bigl(H^{11},H^{22}\bigr)= \eta_{00}\bigl(H^{22},H^{11}\bigr) = \xi,&
	\notag \\
	&\eta_{00}\bigl(E_+^{01},E_-^{02}\bigr)= \eta_{00}\bigl(E_-^{02},E_+^{01}\bigr) = 2,
	\qquad & &
	\eta_{00}\bigl(E_+^{12},E_-^{21}\bigr)= 2\xi^2,&
	\notag \\
	&\eta_{00}\bigl(E_+^{20},E_-^{10}\bigr)= \eta_{00}\bigl(E_-^{10},E_+^{20}\bigr) = 2,
	\qquad &&
	\eta_{00}\bigl(E_-^{21},E_+^{12}\bigr)= 2\xi^2,&
\end{alignat*}
and
\begin{alignat*}{3}
	&\eta_{11}\bigl(H^{00}, H^{11}\bigr)= \eta_{11}\bigl(H^{11}, H^{00}\bigr) = 1,
	\qquad&&
	\eta_{11}\bigl(H^{22}, H^{22}\bigr)= \xi^2,&
	\notag \\
	&\eta_{11}\bigl(E_+^{12},E_-^{02}\bigr)= \eta_{11}\bigl(E_+^{20},E_-^{21}\bigr) = 2\xi,
	\qquad &&
	\eta_{11}\bigl(E_+^{01},E_-^{10}\bigr)= 2,&
	\notag \\
	&\eta_{11}\bigl(E_-^{02},E_+^{12}\bigr)= \eta_{11}\bigl(E_-^{21},E_+^{20}\bigr) =2,
	\qquad &&
	\eta_{11}\bigl(E_-^{10},E_+^{01}\bigr)= 2\xi^2,&
\end{alignat*}
and
\begin{alignat*}{3}
	&\eta_{22}\bigl(H^{00}, H^{22}\bigr)= \eta_{22}\bigl(H^{22}, H^{00}\bigr) = 1,
	\qquad&&
	\eta_{22}\bigl(H^{11}, H^{11}\bigr)= \xi^2,&
	\notag \\
	&\eta_{22}\bigl(E_+^{01}, E_-^{21}\bigr)= \eta_{22}\bigl(E_+^{12}, E_-^{10}\bigr) = 2,
	\qquad&&
	\eta_{22}\bigl(E_+^{20}, E_-^{02}\bigr)= 2\xi^2,&
	\notag \\
	&\eta_{22}\bigl(E_-^{21}, E_+^{01}\bigr)= \eta_{22}\bigl(E_-^{10}, E_+^{12}\bigr) = 2\xi,
	\qquad &&
	\eta_{22}\bigl(E_-^{02}, E_+^{20}\bigr)= 2.&
\end{alignat*}

We are now ready to write down the graded 2nd order Casimir elements of \Zsl, which are given by
\begin{align*}
	&C_{00}= \bigl(H^{00}\bigr)^2 + \xi \bigl\{ H^{11},H^{22}\bigr\} + 2 \bigl\{E_+^{01},E_-^{02} \bigr\} + 2\bigl\{E_+^{20}, E_-^{10} \bigr\} + 2\xi^2\bigl\{ E_+^{12},E_-^{21} \bigr\},
	\notag \\
	&C_{11}= \bigl\{H^{00}, H^{11} \bigr\} + \xi^2 \bigl(H^{22}\bigr)^2 + 2\bigl(\xi E_+^{12}E_-^{02} + E_-^{02}E_+^{12} +\xi E_+^{20} E_-^{21} + E_-^{21} E_+^{20}
	\notag \\
	&\hphantom{C_{11}=}{} + E_+^{01} E_-^{10} + \xi^2 E_-^{10} E_+^{01}\bigr),
	\notag \\
	&C_{22}= \bigl\{ H^{00}, H^{22} \bigr\} + \xi^2 \bigl(H^{11}\bigr)^2 + 2\bigl(E_+^{01} E_-^{21} + \xi E_-^{21} E_+^{01} + E_+^{12} E_-^{10} + \xi E_-^{10} E_+^{12}
	\notag \\
	&\hphantom{C_{22}=}{} + \xi^2 E_+^{20} E_-^{02} + E_-^{02} E_+^{20}\bigr),
\end{align*}
where $\{ \ , \ \}$ denotes the anticommutator.

We now turn to the loop algebra of \Zsl, whose basis is denoted by
\begin{equation*}
	H^{\alpha (m)}, \quad E_+^{\beta (m)}, \quad E_-^{\gamma (m)}, \qquad m \in \mathbb{Z},
\end{equation*}
where $\alpha \in \{ 00, 11, 22 \}$, $\beta \in \{ 01, 12, 20 \}$, $\gamma \in \{ 02, 21, 10 \} $.
Following Theorem \ref{THM:centralext}, the loop algebra admits three central extensions of degrees $00$, $11$ and $22$.
The nonvanishing color Lie brackets are given as follows:
\begin{alignat*}{3}
	&\bigl\llbracket H^{\alpha (m)}, E_{\pm}^{\beta (n)} \bigr\rrbracket= \pm 2\sigma(\alpha,\beta) E_{\pm}^{\alpha+\beta (m+n)},
	\qquad &&& \notag \\
	&\bigl\llbracket H^{00 (m)}, H^{00 (n)} \bigr\rrbracket= m \delta_{m+n,0} c_{00}, \qquad&&
	\bigl\llbracket H^{11 (m)}, H^{22 (n)} \bigr\rrbracket= m \delta_{m+n,0} \xi^2 c_{00},&
	\notag \\
	&\bigl\llbracket H^{00 (m)}, H^{11 (n)} \bigr\rrbracket= m \delta_{m+n,0} c_{11},
	\qquad &&
	\bigl\llbracket H^{22 (m)}, H^{22 (n)} \bigr\rrbracket= m \delta_{m+n,0} \xi c_{11},&
	\notag \\
	&\bigl\llbracket H^{00 (m)}, H^{22 (n)} \bigr\rrbracket= m \delta_{m+n,0} c_{22},\qquad
	&&
	\bigl\llbracket H^{11 (m)}, H^{11 (n)} \bigr\rrbracket= m \delta_{m+n,0} \xi c_{22},&
\end{alignat*}
and
\begin{align*}
	&\bigl\llbracket E_{+}^{01 (m)}, E_{-}^{02 (n)} \bigr\rrbracket= \bigl\llbracket E_+^{20 (m)}, E_-^{10 (n)} \bigr\rrbracket = H^{00 (m+n)} + \frac{m}{2} \delta_{m+n,0} c_{00},
	\notag \\
	&\bigl\llbracket E_{+}^{12 (m)}, E_{-}^{21 (n)} \bigr\rrbracket= \xi H^{00 (m+n)} + \frac{m \xi}{2} \delta_{m+n,0} c_{00},
	\notag \\
	&\bigl\llbracket E_{+}^{01 (m)}, E_{-}^{10 (n)} \bigr\rrbracket= H^{11 (m+n)} + \frac{m}{2} \delta_{m+n,0} c_{11},
	\notag \\
	&\bigl\llbracket E_{+}^{12 (m)}, E_{-}^{02 (n)} \bigr\rrbracket=\bigl\llbracket E_{+}^{20 (m)}, E_{-}^{21 (n)} \bigr\rrbracket = \xi^2 H^{11 (m+n)} + \frac{m\xi^2}{2} \delta_{m+n,0}c_{11},
	\notag \\
	&\bigl\llbracket E_{+}^{01 (m)}, E_{-}^{21 (n)} \bigr\rrbracket= \bigl\llbracket E_{+}^{12 (m)}, E_{-}^{10 (n)} \bigr\rrbracket = H^{22 (m+n)} + \frac{m}{2} \delta_{m+n,0}c_{22},
	\notag \\
	&\bigl\llbracket E_{+}^{20 (m)}, E_{-}^{02 (n)} \bigr\rrbracket= \xi H^{22 (m+n)} + \frac{m\xi}{2} \delta_{m+n,0}c_{22}.
\end{align*}

\section[Z\_2\^{}2-graded extension of osp(m|2n)]{$\boldsymbol{\Z2}$-graded extension of $\boldsymbol{\osp(m|2n)}$} \label{SEC:osp}

In this section, we consider a particular $\Z2$-graded extension of the Lie superalgebra $\osp(m|2n)$ and show that the color Lie algebra has a commutatnt of degree $11.$
The grading group $\Gamma$ and the commutative factor $\omega(\alpha,\beta) = (-1)^{\alpha_1 \beta_1 + \alpha_2\beta_2}$ are the same as in Section~\ref{SEC:qn}.

\subsection{Definition, commutants and basis}

We define the $\Z2$-graded color algebra, denoted by \Zosp, as a subalgebra of algebra $ \gl(m,m|2n,2n)$.
An element of $\mathfrak{gl}(m,m|2n,2n)$ is given by a $(2m+4n) \times (2m+4n)$ matrix with a $4 \times 4$ block structure, where each block has a definite $\Z2$-degree. We consider a particular class of elements of $\mathfrak{gl}(m,m|2n,2n)$ of the form
\begin{equation}
	X =
	\begin{pNiceMatrix}[first-row][first-col]
		& m & m & 2n & 2n \\
		m & A^{00} & A^{11} & A^{01} & A^{10}
		\\
		m & A^{11} & A^{00} & A^{10} & A^{01}
		\\
		2n & C^{01} & C^{10} & C^{00} & C^{11}
		\\
		2n & C^{10} & C^{01} & C^{11} & C^{00}
	\end{pNiceMatrix},
	\label{doubledGL}
\end{equation}
where the superscripts indicate the $\Z2$-degree of each block, and each block matrix appears twice.

The bilinear form defining \Zosp\ is given by a $ 4 \times 4$ block matrix of the same size as~$X$:
\begin{equation*}
	J =
	\begin{pmatrix}
		{\rm i}B_m & 0 & 0 & 0
		\\
		0 & {\rm i}B_m & 0 & 0
		\\
		0 & 0 & G_{2n} & 0
		\\
		0 & 0 & 0 & G_{2n}
	\end{pmatrix}.
\end{equation*}
Here,
\begin{align*}
	&B_m=
	\begin{pmatrix}
		0 & \mathbb{I}_{\ell} \\ \mathbb{I}_{\ell} & 0
	\end{pmatrix}, \quad m = 2\ell,
	\qquad
	B_m =
	\begin{pmatrix}
		0 & \mathbb{I}_{\ell} & 0 \\ \mathbb{I}_{\ell} & 0 & 0
		\\
		0 & 0 & 1
	\end{pmatrix}, \quad m = 2\ell+1,
	\notag \\
	&G_{2n}=
	\begin{pmatrix}
		0 & -\mathbb{I}_{n} \\ \mathbb{I}_{n} & 0
	\end{pmatrix},
\end{align*}
where $ \mathbb{I}_{\ell} $ denotes the $\ell \times \ell$ identity matrix.

To define \Zosp, we impose the following conditions on the matrix $X$ given in \eqref{doubledGL}:
\begin{alignat}{3}
&	X^{00}J + J \bigl(X^{00}\bigr)^{\mathsf{T}}= 0,\qquad &&
	X^{11}J + J \bigl(X^{11}\bigr)^{\mathsf{T}}= 0,&
	\notag\\
&	X^{01}J + {\rm i}J \bigl(X^{01}\bigr)^{\mathsf{T}}= 0,
	\qquad &&
	X^{10}J -{\rm i} J \bigl(X^{10}\bigr)^{\mathsf{T}}= 0.& \label{ospCondition}
\end{alignat}
Here $X^{\alpha}$, with $\alpha \in \mathbb{Z}_2$, denotes the matrix in~\eqref{doubledGL} whose entries are nonzero only in the blocks of $\mathbb{Z}_2^2$-degree $\alpha$; e.g.,
\begin{equation*}
	X^{11} =
	\begin{pmatrix}
		0 & A^{11} & 0 & 0
		\\
		A^{11} & 0 & 0 & 0
		\\
		0 & 0 & 0 & C^{11}
		\\
		0 & 0 & C^{11} & 0
	\end{pmatrix}
\end{equation*}
and $ X^{\mathsf{T}} $ denotes the transpose of $X.$
It is straightforward to check by direct computation that if $X^{\alpha}$ and $Y^{\beta}$ satisfy the conditions in \eqref{ospCondition}, then so does $\llbracket X^{\alpha}, Y^{\beta} \rrbracket$.

The conditions in \eqref{ospCondition} yield the following constraints on the matrix blocks:
\begin{alignat}{3}
	&A^{00} B_m+ B_m \bigl(A^{00}\bigr)^{\mathsf{T}} = 0, \qquad && A^{11} B_m+ B_m \bigl(A^{11}\bigr)^{\mathsf{T}} = 0,&
	\notag\\
&	C^{00}G_{2n}+ G_{2n}\bigl(C^{00}\bigr)^{\mathsf{T}} = 0,\qquad&& C^{11} G_{2n}+ G_{2n}\bigl(C^{11}\bigr)^{\mathsf{T}} = 0,&
	\notag \\
&	C^{01}= -G_{2n}\bigl(A^{01}\bigr)^{\mathsf{T}} B_m, \qquad && C^{10}= G_{2n}\bigl(A^{10}\bigr)^{\mathsf{T}} B_m.&
	\label{StrOfX}
\end{alignat}
It follows that
\begin{equation*}
	A^{00}, A^{11} \in \so(m), \qquad C^{00}, C^{11} \in \sp(2n)
\end{equation*}
and $ A^{01}$, $A^{10}$ are arbitrary $ m \times 2n$ matrices.
Therefore, we see that
\[
\dim \Z2\text{-}\osp(m|2n) = (m+2n)^2-m+2n,
\]
which is twice the dimension of the Lie superalgebra $\osp(m|2n)$.

We comment on preceding works on $\Z2$-graded extensions of $\osp(m|2n)$ in turn:
\begin{itemize}\itemsep=0pt
	\item[(1)] Rittenberg and Wyler introduced a $\Z2$-graded extension of the Lie supergroup $Osp(m|2n)$ and define a $\Z2$-graded $\osp(m|2n)$ as its infinitesimal transformation \cite{rw2}.
	Algebraic definitions of a $\Z2$-graded extension of $\osp(m|2n)$ can be found in \cite{GrJav, JaYangWyb,SVdJ2018,SVdJ2024}.
	However, none of these works discuss the particular subclass defined by \eqref{doubledGL}.
	\item[(2)] A $\Z2$-graded extension of the type defined in \eqref{doubledGL}, which corresponds to $\Z2$-$\osp(1|2)$ in the present notation, first appeared as a $\Z2$-graded extension of conformal quantum mechanics~\cite{Aizawa_2020}.
	The same algebra was also obtained via a module construction of color Lie algebras~\cite{LU20231}.
	It was pointed out in \cite{AiSe} that $\Z2$-$\osp(1|2)$ has a $11$-graded commutant.
	\item[(3)] The lowest weight representations of $\Z2$-$\osp(1|2)$ are classified in \cite{AmaAi} (see \cite{AlHoSe} for other representations of $\Z2$-$\osp(1|2)$).
\end{itemize}

Now we return to the search for commutants of \Zosp.

\begin{Proposition} \label{PROP:comm}
	There exists a unique $($up to constant multiple$)$ $11$-graded commutant of \Zosp, which is given by
	\begin{equation}
		M^{11} =
		\begin{pNiceArray}{cc|cc}
			0 & \mathbb{I}_m & &
			\\
			\mathbb{I}_m & 0 & &
			\\ \hline
			& & 0 & -\mathbb{I}_{2n}
			\\
			& & -\mathbb{I}_{2n} & 0
		\end{pNiceArray}.
		\label{M11osp}
	\end{equation}
	On the other hand, there exist no $01$- or $10$-graded commutants.
\end{Proposition}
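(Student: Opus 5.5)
The plan is to work directly with the block structure of $\gl(m,m|2n,2n)$. Order the basis of $V=\mathbb{C}^{m,m|2n,2n}$ so that the four blocks of rows and columns in \eqref{doubledGL} correspond to $V_{00}$, $V_{11}$, $V_{01}$, $V_{10}$. This is consistent with the degrees attached to the blocks of $X$. A homogeneous $M^{\mu}$ of degree $\mu$ then has a prescribed zero pattern, and the commutant condition $\llbracket M^{\mu},X\rrbracket=0$ for $X$ of degree $\alpha$ reads $M^{\mu}X=\omega(\mu,\alpha)XM^{\mu}$. I will impose this condition degree by degree, using the parametrization \eqref{StrOfX} of \Zosp. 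That parametrization gives $A^{00},A^{11}\in\so(m)$, $C^{00},C^{11}\in\sp(2n)$, and $A^{01},A^{10}$ arbitrary $m\times 2n$ matrices, with $C^{01},C^{10}$ determined by $A^{01},A^{10}$.

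\emph{Step 1: \eqref{M11osp} is a commutant.} For degree $00$ and $11$ elements we have $\omega(11,\alpha)=1$. Since the blocks of $M^{11}$ are multiples of the identity, and $X^{00}$, $X^{11}$ are block-diagonal in the $V_{00}\oplus V_{11}$ and $V_{01}\oplus V_{10}$ sectors with the "doubled'' pattern of \eqref{doubledGL}, ordinary commutation is immediate. For degree $01$ and $10$ elements we have $\omega(11,\alpha)=-1$, so we must check $M^{11}X=-XM^{11}$. Left multiplication by $M^{11}$ swaps the two row-blocks within each sector, with sign $-1$ on the $2n$ sector. Right multiplication swaps column-blocks with the same signs. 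The relative sign between the $m$ and $2n$ sectors then produces exactly the required anticommutation; for instance, the $(1,4)$ entry gives $A^{01}$ on one side and $-A^{01}$ on the other.

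\emph{Step 2: uniqueness in degree $11$.} Write a general degree-$11$ matrix as $M=\begin{pmatrix}0&P&&\\Q&0&&\\&&0&R\\&&S&0\end{pmatrix}$. Rather than using Schur's lemma for $\so(m)$, which fails for $m\le 2$, I would use the odd elements, whose off-diagonal blocks $A^{01}$ (resp.\ $A^{10}$) are completely arbitrary. Writing out $MX^{01}=-X^{01}M$ block by block gives relations of the form $PA=-AS$, $QA=-AR$ for all $m\times 2n$ matrices $A$, together with analogous relations from the $C^{01}$ blocks. The elementary fact I will use is that if $YA=AZ$ for all rectangular $A$, then $Y=z\mathbb{I}$ and $Z=z\mathbb{I}$ for a common scalar $z$. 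This forces all four blocks to be scalars with $S=-P$ and $R=-Q$. The degree-$10$ elements then give $Q=P$ after a similar computation, which yields \eqref{M11osp} up to a constant. I expect this sign bookkeeping to be the main obstacle. In particular, the $C$-blocks involve $G_{2n}$ and $B_m$ through \eqref{StrOfX}, and I must make sure these relations are consistent with, rather than contradictory to, those coming from the $A$-blocks. Step 1 guarantees consistency, so the task is only to extract enough equations. The degenerate cases $m=0$ or $n=0$ would be handled separately, where only even-type constraints remain.

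\emph{Step 3: no degree $01$ or $10$ commutants.} A degree-$01$ matrix $M$ has nonzero blocks only at positions $(1,3),(3,1),(2,4),(4,2)$. These are maps between $\mathbb{C}^m$ and $\mathbb{C}^{2n}$. Impose commutation, with $\omega(01,00)=1$, against $X^{00}=\mathrm{diag}(A,A,C,C)$. The $(1,3)$ block gives $AK=KC$ for all $A\in\so(m)$, $C\in\sp(2n)$ independently. Taking $A=0$ gives $KC=0$ for all $C\in\sp(2n)$. Since the images of elements of $\sp(2n)$ span $\mathbb{C}^{2n}$, we get $K=0$; the blocks mapping $\mathbb{C}^{2n}\to\mathbb{C}^m$ vanish similarly, by taking $C=0$. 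The degree-$10$ case is identical, with the nonzero blocks at positions $(1,4),(4,1),(2,3),(3,2)$. This shows that no $01$- or $10$-graded commutants exist.
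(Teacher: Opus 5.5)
Your argument is correct in substance, but your uniqueness proof in degree $11$ takes a different route from the paper. The paper starts from the even part. It uses $[X^{00},M^{11}]=0$ and Schur's lemma for the natural representations of $\so(m)$ and $\sp(2n)$ to make the four blocks scalar. Then $[X^{11},M^{11}]=0$ identifies them in pairs, $\{X^{01},M^{11}\}=0$ fixes the relative sign, and $X^{10}$ is only checked for consistency. You use only the odd elements. You exploit that $A^{01}$ and $A^{10}$ range over all $m\times 2n$ matrices, together with the elementary fact that $YA=AZ$ for all such $A$ forces $Y=Z=z\mathbb{I}$. You settle existence by the direct check of Step~1, which the paper leaves implicit. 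Your route needs no irreducibility, so it is uniform in $m$. In particular it covers $m=2$: there $\so(2)$ acts reducibly on $\mathbb{C}^2$, and the paper's Schur step by itself only yields diagonal blocks (the conclusion is rescued by the odd conditions). The paper's route is more conceptual and would carry over to algebras whose odd blocks are not completely free.

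There are two small slips.

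First, with your labelling, $MX^{01}=-X^{01}M$ gives $PA=-AR$ and $QA=-AS$. The relations you wrote, with $R$ and $S$ exchanged, are the ones coming from $X^{10}$. This is harmless, since you use both degrees and the outcome $P=Q=-R=-S$ is symmetric.

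Second, in Step~3 the blocks $K'\colon\mathbb{C}^m\to\mathbb{C}^{2n}$ satisfy $K'A=CK'$. Setting $C=0$ gives nothing when $m=1$, because $\so(1)=0$. Set $A=0$ instead and use that $\sp(2n)$ has no common kernel on $\mathbb{C}^{2n}$; this is effectively what the paper does.

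Finally, the degenerate cases $m=0$ or $n=0$ lie outside the statement and need no separate treatment. Uniqueness actually fails there, for example for $(m,n)=(2,0)$.
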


\begin{proof}
	$M^{11}$ must be of the form
	\begin{equation*}
		M^{11} =
		\begin{pNiceArray}{cc|cc}
			0 & N_1 & &
			\\
			N_2 & 0 & &
			\\ \hline
			& & 0 & N_3
			\\
			& & N_4 & 0
		\end{pNiceArray}.
	\end{equation*}
	The condition $[X^{00}, M^{11}] = 0$ yields the following constraints on $N_k$:
	\begin{equation*}
		\bigl[A^{00},N_1\bigr] = \bigl[A^{00},N_2\bigr] = 0, \qquad \bigl[C^{00},N_3\bigr] = \bigl[C^{00},N_4\bigr] = 0.
	\end{equation*}
	Recalling that $ A^{00} \in \so(m) $ and $ C^{00} \in \sp(2n)$, we obtain, by Schur's lemma,
	\begin{equation*}
		N_1 = \alpha_1 \mathbb{I}_m, \qquad N_2 = \alpha_2 \mathbb{I}_m, \qquad
		N_3 = \alpha_3 \mathbb{I}_{2n}, \qquad N_4 = \alpha_4 \mathbb{I}_{2n}, \qquad \alpha_k \in \mathbb{C}.
	\end{equation*}
	The condition $[X^{11}, M^{11}] = 0$ further implies that $\alpha_1 = \alpha_2$ and $\alpha_3 = \alpha_4$. Finally, the condition $\bigl\{ X^{01}, M^{11}\bigr\} = 0$ yields $ \alpha_3 = -\alpha_1$, and this relation is consistent with $\bigl\{ X^{10}, M^{11}\bigr\} = 0$.
	Thus, $M^{11}$ is given by~\eqref{M11osp}.

	We can search for $M^{01}$ in a similar way. We write
\begin{equation*}
	M^{01} =
	\begin{pNiceArray}{cc|cc}
		& & N_1 & 0
		\\
		& & 0 & N_2 \\ \hline
		N_3 & 0 & &
		\\
		0 & N_4 & &
	\end{pNiceArray}
\end{equation*}
and the condition $[X^{00}, M^{01}] = 0$ yields the constraints
\[
	A^{00} N_1 = N_1 C^{00}, \qquad A^{00} N_2 = N_2 C^{00}, \qquad
	C^{00} N_3 = N_3 A^{00}, \qquad C^{00} N_4 = N_4 A^{00}.
\]
If $ A^{00} = 0 $ and $ C^{00} \neq 0$, then $ N_k \equiv 0$, and hence $ M^{01} = 0.$
In the same way, one finds that $M^{10} = 0$.
\end{proof}

From Proposition~\ref{PROP:comm}, we see that \Zosp\ has $00$- and $11$-graded 2nd order Casimir elements.
To derive explicit formulas for them, we first need to specify a basis of \Zosp.
This is also helpful to understand structures of the color Lie algebra.
Recall that $ X \in $ \Zosp\ has a $ 4 \times 4 $ block structure, see \eqref{doubledGL}, and that each block further has the following substructure.
In the case of $ m = 2\ell+1$,
\begin{equation*}
	A^{00} =
	\begin{pmatrix}
		L & Y & x
		\\
		W & -L^{\mathsf{T}} & y
		\\
		-y^{\mathsf{T}} & -x^{\mathsf{T}} & 0
	\end{pmatrix}
	\in \so(2\ell+1), \qquad
	C^{00} =
	\begin{pmatrix}
		\tilde{L} & \tilde{Y} \\ \tilde{W} & -\tilde{L}^{\mathsf{T}}
	\end{pmatrix}
	\in \sp(2n),
\end{equation*}
where $ L \in \gl(\ell)$ and $ Y$, $W $ are skew-symmetric $\ell \times \ell $ matrices, and $ x, y \in \mathbb{C}^{\ell}$,
while $\tilde{L} \in \gl(n)$ and $ \tilde{Y}$, $\tilde{W}$ are symmetric $ n \times n $ matrices.
The block structures of $A^{01}$ and $A^{10}$ are induced from this structure:
\begin{equation*}
	A^{01} =
	\begin{pmatrix}
		P & Q \\ R & S \\
		a & b
	\end{pmatrix},
	\qquad
	A^{10} =
	\begin{pmatrix}
		\tilde{P} & \tilde{Q} \\ \tilde{R} & \tilde{S} \\
		\tilde{a} & \tilde{b}
	\end{pmatrix},
\end{equation*}
where $ P$, $Q$, $\tilde{P}$ and $\tilde{Q} $ are $ \ell \times n $ matrices, and $ a$, $b$, $\tilde{a} $ and $\tilde{b} $ are $n$-dimensional row vectors.
For $m = 2\ell$, the corresponding changes are obvious.
Then, \eqref{StrOfX} determines $ C^{01}$ and $ C^{10}$ as follows:%
\begin{equation*}
	C^{01}
	=
	\begin{pmatrix}
		S^{\mathsf{T}} & Q^{\mathsf{T}} & b^{\mathsf{T}} \\
		-R^{\mathsf{T}} & -P^{\mathsf{T}} & -a^{\mathsf{T}}
	\end{pmatrix},
	\qquad
	C^{10} =
	\begin{pmatrix}
		-\tilde{S}^{\mathsf{T}} & - \tilde{Q}^{\mathsf{T}} & -\tilde{b}^{\mathsf{T}}
		\\
		\tilde{R}^{\mathsf{T}} & \tilde{P}^{\mathsf{T}} & \tilde{a}^{\mathsf{T}}
	\end{pmatrix}.
\end{equation*}
Therefore, we introduce a concise notation for matrix units, by which the position of a nonvanishing entry can be easily read off.

Let $ e_{ab}$ be the matrix unit of size $(2m+4n)\times (2m+4n)$.
From now on, Latin indices take values in $\{ 1, 2, \dots, \ell= \mathrm{rank}\ \so(m) \}$, while Greek indices take values in $\{ 1, 2, \dots, n = \mathrm{rank}\ \sp(2n) \}$.
We~introduce the following notation for the matrix units whose nonvanishing entries lie in the first $\ell $ rows of the matrix $ X $ in \eqref{doubledGL}: For $ m = 2\ell$,
\begin{alignat*}{5}
	&E_{ij}:= e_{ij}, \qquad&& E_{ij'}:= e_{i\, \ell+j}, \qquad&& {E_i}^j:= e_{i\, m+j},\qquad&& {E_{i}}^{j'}:= e_{i\, m+\ell+j},&
	\notag\\
&	E_{i\nu}:=e_{i\, 2m+\nu}, \qquad && E_{i\nu'}:= e_{i\, 2m+n+\nu}, \qquad && {E_i}^{\nu}:= e_{i\, 2m+2n+\nu}, \qquad && {E_i}^{\nu'}:= e_{i\, 2m+3n+\nu},&
\end{alignat*}
and we need two more for $m = 2\ell+1$:
\begin{equation*}
	E_{im} := e_{i\, 2\ell+1}, \qquad {E_i}^m := e_{i,4\ell+2}.
\end{equation*}
The meaning of this notation is clear; $E_{ij}$ has its nonvanishing entry in the $\ell \times \ell$ block at the top left corner of $X$.
As the second index changes, the position of the nonvanishing entry shifts to the right.
The same rule can be applied to the first index (with shifts downward), allowing us to introduce a notation for the matrix units whose nonvanishing entries lie in the blocks: 
\begin{equation*}\renewcommand{\arraystretch}{1.3}
		\begin{pNiceArray}{cc:c|cc:c|cc|cc}[first-row,first-col]
			& \ell & \ell & 1 & \ell & \ell & 1 & n & n & n & n
			\\
			\ell & E_{ij} & E_{ij'} & E_{im} & {E_i}^{j} & {E_i}^{j'} & {E_{i}}^m & E_{i\nu} & E_{i\nu'} & {E_i}^{\nu} & {E_i}^{\nu'} \\
			\ell & E_{i'j} & E_{i'j'} & E_{i'm} & {E_{i'}}^{j} & {E_{i'}}^{j'} & {E_{i'}}^m & E_{i'\nu} & E_{i'\nu'} & {E_{i'}}^{\nu} & {E_{i'}}^{\nu'} \\ \hdashline
			1 & E_{mj} & E_{mj'} & E_{mm} & {E_m}^{j} & {E_m}^{j'} & {E_{m}}^m & E_{m\nu} & E_{m\nu'} & {E_m}^{\nu} & {E_m}^{\nu'} \\ \hline
			\ell &{E^i}_{j} & {E^i}_{j'} & {E^{i}}_{m} & E^{ij} & E^{ij'} & E^{im} & {E^i}_{\nu} & {E^i}_{\nu'} & E^{i\nu} & E^{i\nu'} \\
			\ell &{E^{i'}}_{j} & {E^{i'}}_{j'} & {E^{i'}}_{m} & E^{i'j} & E^{i'j'} & E^{i'm} & {E^{i'}}_{\nu} & {E^{i'}}_{\nu'} & E^{i'\nu} & E^{i'\nu'} \\ \hdashline
			1 & {E^m}_{j} & {E^m}_{j'} & {E^m}_{m} & E^{mj} & E^{mj'} & E^{mm} & {E^{m}}_{\nu} & {E^{m}}_{\nu'} & E^{m\nu} & E^{m\nu'} \\ \hline
			n & E_{\mu j} & E_{\mu j'} & E_{\mu m} & {E_\mu}^{j} & {E_\mu}^{j'} & {E_{\mu}}^m & E_{\mu\nu} & E_{\mu\nu'} & {E_\mu}^{\nu} & {E_\mu}^{\nu'} \\
			n & E_{\mu'j} & E_{\mu'j'} & E_{\mu' m} & {E_{\mu'}}^{j} & {E_{\mu'}}^{j'} & {E_{\mu'}}^m & E_{\mu'\nu} & E_{\mu'\nu'} & {E_{\mu'}}^{\nu} & {E_{\mu'}}^{\nu'} \\ \hline
			n & {E^\mu}_{j} & {E^\mu}_{j'} & {E^{\mu}}_{m} & E^{\mu j} & E^{\mu j'} & E^{\mu m} & {E^\mu}_{\nu} & {E^\mu}_{\nu'} & E^{\mu\nu} & E^{\mu\nu'} \\
			n & {E^{\mu'}}_{j} & {E^{\mu'}}_{j'} & {E^{\mu'}}_{m} & E^{\mu'j} & E^{\mu'j'} & E^{\mu' m} & {E^{\mu'}}_{\nu} & {E^{\mu'}}_{\nu'} & E^{\mu'\nu} & E^{\mu'\nu'}
		\end{pNiceArray}.
	\end{equation*}
Note that the prime on $m$ is omitted for simplicity.
One advantage of this notation is the simplicity of the product: if the adjacent indices are not of the same type, the product vanishes. For example,
\begin{equation*}
	E_{ij'} E_{pq} = {E_i}^j E^{\mu \nu} = {E_i}^j {E_p}^q = 0, \qquad
	E_{ij}{E_p}^q = \delta_{jp} {E_i}^q.
\end{equation*}

Using this notation of the block structures described above, we choose the following basis of \Zosp\ in the case of $ m= 2\ell + 1$:
\begin{enumerate}\itemsep=0pt
\item[(i)] basis of degree $00$
\begin{align*}
	&T_{ij}=E_{ij}-E_{j'i'}+E^{ij}-E^{j'i'}, \qquad 1 \leq i, j \leq \ell, \text{ or } j=m,
	\notag\\
	&T_{ij'}=E_{ij'}-E_{ji'}+E^{ij'}-E^{ji'},\qquad 1 \leq i< j \leq \ell,
	\notag\\
	&T_{i'j}=E_{i'j}-E_{j'i}+E^{i'j}-E^{j'i},\qquad 1 \leq i< j \leq \ell, \text{ or } j=m,
	\notag\\
	&T_{\mu\nu}=E_{\mu\nu}-E_{\nu'\mu'}+E^{\mu\nu}-E^{\nu'\mu'}, \qquad 1 \leq \mu, \nu \leq n,
	\notag\\
	&T_{\mu\nu'}=
	\begin{cases}
		E_{\mu\nu'}+E_{\nu\mu'}+E^{\mu\nu'}+E^{\nu\mu'},& 1 \leq \mu<\nu \leq n,\\
		E_{\mu\mu'}+E^{\mu\mu'},&\mu=\nu,
	\end{cases}
	\notag\\
	&T_{\mu'\nu}=
	\begin{cases}
		E_{\mu'\nu}+E_{\nu'\mu}+E^{\mu'\nu}+E^{\nu'\mu},& 1 \leq \mu<\nu \leq n,\\
		E_{\mu'\mu}+E^{\mu'\mu}, & \mu=\nu.
	\end{cases}
\end{align*}
\item[(ii)] basis of degree $11$
\begin{align*}
	&{U_{i}}^{j}={E_{i}}^{j}-{E_{j'}}^{i'}+{E^{i}}_{j}-{E^{j'}}_{i'}, \qquad 1 \leq i, j \leq \ell, \text{ or } j=m,
	\notag\\
	&{U_{i}}^{j'}={E_{i}}^{j'}-{E_{j}}^{i'}+{E^{i}}_{j'}-{E^{j}}_{i'},\qquad 1 \leq i< j \leq \ell,
	\notag\\
	&{U_{i'}}^{j}={E_{i'}}^{j}-{E_{j'}}^{i}+{E^{i'}}_{j}-{E^{j'}}_{i},\qquad 1 \leq i< j \leq \ell, \text{ or } j=m,
	\notag\\
	&{U_{\mu}}^{\nu}={E_{\mu}}^{\nu}-{E_{\nu'}}^{\mu'}+{E^{\mu}}_{\nu}-{E^{\nu'}}_{\mu'},
	\notag\\
	&{U_{\mu}}^{\nu'}=
	\begin{cases}
		{E_{\mu}}^{\nu'}+{E_{\nu}}^{\mu'}+{E^{\mu}}_{\nu'}+{E^{\nu}}_{\mu'},& 1 \leq \mu<\nu \leq n,\\
		{E_{\mu}}^{\mu'}+{E^{\mu}}_{\mu'},& \mu=\nu,
	\end{cases}
	\notag\\
	&{U_{\mu'}}^{\nu}=
	\begin{cases}
		{E_{\mu'}}^{\nu}+{E_{\nu'}}^{\mu}+{E^{\mu'}}_{\nu}+{E^{\nu'}}_{\mu},& 1 \leq \mu<\nu \leq n,\\
		{E_{\mu'}}^{\mu}+{E^{\mu'}}_{\mu},& \mu=\nu.
	\end{cases}
\end{align*}
\item[(iii)] basis of degree $01$, ($1 \leq i \leq \ell$, $1 \leq \mu \leq n$)
\begin{align*}
	&\Lambda_{i\mu}=E_{i\mu}-E_{\mu'i'}+E^{i\mu}-E^{\mu'i'},
	\notag\\
	&\Lambda_{i\mu'}=E_{i\mu'}+E_{\mu i'}+E^{i\mu'}+E^{\mu i'},
	\notag\\
	&\Lambda_{i'\mu}=E_{i'\mu}-E_{\mu'i}+E^{i'\mu}-E^{\mu'i},
	\notag\\
	&\Lambda_{i'\mu'}=E_{i'\mu'}+E_{\mu i}+E^{i'\mu'}+E^{\mu i}.
\end{align*}
In the first and the second lines, we allow $ i =m$.
\item[(iv)] basis of degree $10$, ($1 \leq i \leq \ell$, $1 \leq \mu \leq n$)
\begin{align*}
&	{\Gamma_{i}}^{\mu}={E_{i}}^{\mu}+{E_{\mu'}}^{i'}+{E^{i}}_{\mu}+{E^{\mu'}}_{i'},
	\notag\\
&	{\Gamma_{i}}^{\mu'}={E_{i}}^{\mu'}-{E_{\mu}}^{i'}+{E^{i}}_{\mu'}-{E^{\mu}}_{i'},
	\notag\\
&	{\Gamma_{i'}}^{\mu}={E_{i'}}^{\mu}+{E_{\mu'}}^{i}+{E^{i'}}_{\mu}+{E^{\mu'}}_{i},
	\notag\\
&	{\Gamma_{i'}}^{\mu'}={E_{i'}}^{\mu'}-{E_{\mu}}^{i}+{E^{i'}}_{\mu'}-{E^{\mu}}_{i}.
\end{align*}
In the first and the second lines, we allow $ i =m$.
The basis for the case $m = 2\ell$ is obtained by eliminating the elements with $i=m $ and $j=m$.
\end{enumerate}

\subsection[Structure of Z\_2\^{}2-osp(m|2n) and 2nd order Casimir elements]{Structure of $\boldsymbol{\Z2}$-$\boldsymbol{\osp(m|2n)}$ and 2nd order Casimir elements}

The Cartan subalgebra of \Zosp\ is defined as the maximal commuting subalgebra of degree $00$.
Obviously, it is spanned by
\begin{alignat}{3}
&	H_i:= T_{ii} = E_{ii} - E_{i'i'} + E^{ii} - E^{i'i'}, \qquad&&
	 1 \leq i \leq \ell ,&
	\nn \\
	&H_{\mu}:= T_{\mu\mu} = E_{\mu\mu} - E_{\mu'\mu'} + E^{\mu\mu} - E^{\mu'\mu'},\qquad
	& & 1 \leq \mu \leq n,&
\end{alignat}
and has dimension $\ell+n$.

Using the Cartan subalgebra, we obtain the root space decomposition of $\g := $\Zosp\ as usual. Let $ \mathfrak{h} $ be the Cartan subalgebra of $\g$ and
\begin{equation*}
	\g^{(\lambda)} = \{ X \in \g \mid [H, X] = \lambda(H) X, \ \forall H \in \mathfrak{h} \}.
\end{equation*}
Note that, since $\deg(\mathfrak{h}) = 00$, the color Lie bracket is identical to the commutator.
First, observe that there exist zero roots of degree $11$:{\samepage
\begin{equation*}
	[H, {U_i}^i] = [H, {U_{\mu}}^{\mu}] = 0, \qquad \forall H \in \mathfrak{h}.
\end{equation*}
Hence, the zero root space has dimension $\ell+n$.}

Denote by $ \epsilon_i$, $\delta_{\mu}$ the basis of the dual space $\mathfrak{h}^*$ such that
\begin{equation*}
	\epsilon_i(H_j) = \delta_{ij}, \qquad
	\delta_{\mu}(H_{\nu}) = \delta_{\mu\nu}.
\end{equation*}
Then, it is not difficult to see that the basis of $\g$ introduced in the previous subsection yields a~basis of the root spaces, e.g.,
\begin{align*}
	&[H_i, T_{jk}]= (\delta_{ij} - \delta_{ik} ) T_{jk} = (\epsilon_j - \epsilon_k)(H_i) T_{jk},
	\nn \\
	&[H_i, {U_j}^k]=(\delta_{ij} - \delta_{ik} ) {U_j}^k = (\epsilon_j - \epsilon_k)(H_i) {U_j}^k,
	\nn \\
	&[H_i, \Lambda_{j\nu}]= \delta_{ij} \Lambda_{j\nu} = \epsilon_j(H_i) \Lambda_{j\nu},
	\nn \\
	&[H_i, {\Gamma_j}^{\nu}]= \delta_{ij}{\Gamma_j}^{\nu} = \epsilon_j(H_i) {\Gamma_j}^{\nu}.
\end{align*}
Repeating similar computations, we see that, apart from the zero root, the root system of $\g$ coincides with that of the ordinary Lie superalgebra~$\osp(m|2n)$.
However, each nonzero root space is two-dimensional and consists of basis elements of different degrees.
The root system of~$\g$, including the zero root, is summarized in the table below. For $m = 2\ell$,
	\begin{equation*}\renewcommand{\arraystretch}{1.2}
		\begin{NiceArray}{c|c||c|c}
			\lambda & \g^{(\lambda)} & \lambda & \g^{(\lambda)}
			\\ \hline
			0 & U_i^{\ i}, \quad U_{\mu}^{\ \mu} & \epsilon_i - \epsilon_j & T_{ij}, \quad U_j^{\ j}
			\\
			\epsilon_i+\epsilon_j & T_{ij'} \quad U_i^{\ j'} & -\epsilon_i - \epsilon_j & T_{j'k}, \quad U_{j'}^{\ k}
			\\
			\delta_{\mu}-\delta_{\nu} & T_{\mu\nu}, \quad U_{\mu}^{\ \nu} & &
			\\
			\delta_{\mu}+\delta_{\nu} & T_{\mu\nu'}, \quad U_{\mu}^{\ \nu'} & -\delta_{\mu}-\delta_{\nu} & T_{\mu' \nu}, \quad U_{\mu'}^{\ \;\nu}
			\\
			2\delta_{\mu} & T_{\mu\mu'},\quad U_{\mu}^{\ \mu'} & -2\delta_{\mu} & T_{\mu'\mu},\quad U_{\mu'}^{\ \;\mu}
			\\ \hline
			\epsilon_i - \delta_{\mu} & \Lambda_{i\mu}, \quad \Gamma_i^{\ \mu} &
			-\epsilon_i+ \delta_{\mu} & \Lambda_{i'\mu'}, \quad \Gamma_{i'}^{\ \; \mu'}
			\\
			\epsilon_i + \delta_{\mu} & \Lambda_{i\mu'}, \quad \Gamma_i^{\mu'} &
			-\epsilon_i - \delta_{\mu} & \Lambda_{i'\mu}, \quad \Gamma_{i'}^{\ \;\mu}
		\end{NiceArray}
	\end{equation*}
	For $ m = 2\ell + 1$, we have additional roots
	\begin{equation*}\renewcommand{\arraystretch}{1.2}
		\begin{NiceArray}{c|c||c|c}
			\lambda & \g^{(\lambda)} & \lambda & \g^{(\lambda)}
			\\ \hline
			\epsilon_i & T_{im}, \quad U_{i}^{\ m} & -\epsilon_i & T_{i'm}, \quad U_{i'}^{\ m}
			\nn \\ \hline
			\delta_{\mu} & \Lambda_{m\mu'}, \quad \Gamma_{m}^{\ \;\mu'} & -\delta_{\mu} & \Lambda_{m\mu}, \quad \Gamma_{m}^{\ \;\mu}
		\end{NiceArray}
	\end{equation*}

Let us now turn to the normalized invariant forms on $\g$ defined by
\begin{equation*}
	\eta^{00}(X,Y) = \frac{1}{4}\operatorname{ctr}(XY),
	\qquad
	\eta^{11}(X,Y) = \frac{1}{4}\operatorname{ctr}\bigl(XM^{11}Y\bigr),
	\qquad X, Y \in \g.
\end{equation*}
The symmetry properties of these bilinear forms and the degrees $\deg(X)$, $\deg(Y)$ for which they take nonvanishing values are summarized as follows:
\begin{equation*}
	\begin{NiceArray}{c|cc|c}
		& \deg(X) & \deg(Y) & \text{symmetry}
		\\ \hline
		\Block{4-1}{\eta^{00}(X,Y)} & 00 & 00 & \Block{2-1}{\text{symmetric}}
		\\
		& 11 & 11 &
		\\ \Hline[start=2,end=4]
		& 01 & 01 & \Block{2-1}{\text{skew-symmetric}}
		\\
		& 10 & 10 &
		\\ \hline
		\Block{4-1}{\eta^{11}(X,Y)} & 00 & 11 & \Block{4-1}{\text{symmetric}}
		\\
		& 11 & 00 &
		\\
		& 01 & 10 &
		\\
		& 10 & 01 &
	\end{NiceArray}
\end{equation*}

Using the fact that the $11$-graded commutant \eqref{M11osp} can be written as
\[
	M^{11} = \sum_{i}\bigl({E_i}^i + {E_{i'}}^{i'} + {E^i}_i + {E^{i'}}_{i'}\bigr)
	-\sum_{\mu} \bigl({E_{\mu}}^{\mu} + {E_{\mu'}}^{\mu'} + {E^{\mu}}_{\mu} + {E^{\mu'}}_{\mu'}\bigr),
\]
it is easy to verify that $M^{11}$ maps the $11$- and $10$-graded basis of $\g$ to those of degree $00$ and $01$, respectively:
\begin{alignat*}{4}
&	M^{11} {U_i}^j= T_{ij}, \qquad&& M^{11} {U_i}^{j'} = T_{ij'},\qquad&& M^{11} {U_{i'}}^j= T_{i'j},&
	\notag \\
&	M^{11} {U_{\mu}}^{\nu}= -T_{\mu\nu}, \qquad && M^{11} {U_{\mu}}^{\nu'}= -T_{\mu\nu'}, \qquad && M^{11} {U_{\mu'}}^{\nu}= -T_{\mu'\nu},&
\end{alignat*}
and{\samepage
\begin{equation*}
	M^{11}{\Gamma_i}^{\mu} = \Lambda_{i\mu}, \qquad M^{11}{\Gamma_i}^{\mu'} = \Lambda_{i\mu'}, \qquad M^{11}{\Gamma_{i'}}^{\mu} = \Lambda_{i'\mu}, \qquad M^{11}{\Gamma_{i'}}^{\mu'} = \Lambda_{i'\mu'}
\end{equation*}
Therefore, the computation of $\eta^{11}$ reduces to that of $\eta^{00}$.}

Now the computation of $\eta^{00}$ and $\eta^{11}$ is straightforward. Here we list the nonvanishing components for $m = 2\ell + 1$. For $m = 2\ell$, the components corresponding to the basis elements with index $m$ do not exist. We note that additional nonvanishing components can be obtained from the symmetry properties of the bilinear forms; for simplicity, we do not list them.
Components corresponding to degree $00$ and $11$ basis:
\begin{align}
&	\eta^{00}(T_{ij},T_{pq})= \eta^{00}({U_i}^j, {U_p}^q) = \eta^{11}(T_{ij},{U_p}^q) = \delta_{iq} \delta_{jp},
	\notag\\
&	\eta^{00}(T_{ij'}, T_{p'q})= \eta^{00}\bigl({U_i}^{j'}, {U_{p'}}^{q}\bigr) = \eta^{11}(T_{ij'},{U_{p'}}^q) = \eta^{11}\bigl(T_{i'j},{U_p}^{q'}\bigr) = -\delta_{ip} \delta_{jq},
	\notag \\
	&\eta^{00}(T_{im}, T_{j'm})= \eta^{00}({U_i}^m,{U_{j'}}^m) = \eta^{11}(T_{im},{U_{j'}}^m) = \eta^{11}(T_{i'm}, {U_j}^m) = -\delta_{ij},
	\notag \\
	&\eta^{00}(T_{\mu\nu}, T_{\rho\sigma})= \eta^{00}({U_{\mu}}^{\nu},{U_{\rho}}^{\sigma}) = -\eta^{11}(T_{\mu\nu},{U_{\rho}}^{\sigma})= -\delta_{\mu\sigma} \delta_{\nu\rho},
	\notag\\
	&\eta^{00}(T_{\mu\nu'}, T_{\rho'\sigma})= \eta^{00}\bigl({U_{\mu}}^{\nu'},{U_{\rho'}}^{\sigma}\bigr) = -\eta^{11}(T_{\mu\nu'},{U_{\rho'}}^{\sigma}) =
	-\eta^{11}\bigl(T_{\mu'\nu},{U_{\rho}}^{\sigma'}\bigr)
	\notag\\
	&\hphantom{\eta^{00}(T_{\mu\nu'}, T_{\rho'\sigma})}{}
=
	\begin{cases}
		-\delta_{\mu\rho} \delta_{\nu\sigma}, & \mu < \nu, \rho < \sigma
		\\[5pt]
		-\frac{1}{2} \delta_{\mu\rho}, & \mu = \nu, \rho=\sigma,
	\end{cases}
	\label{BFosp1}
\end{align}
and $01$, $10$ basis:
\begin{align}
&	\eta^{00}(\Lambda_{i\mu}, \Lambda_{j'\nu'})= -\eta^{00}(\Lambda_{i\mu'}, \Lambda_{j'\nu}) = -\eta^{00}\bigl({\Gamma_i}^{\mu}, {\Gamma_{j'}}^{\nu'}\bigr) = \eta^{00}({\Gamma_i}^{\mu'}, {\Gamma_{j'}}^{\nu}) = \delta_{ij}\delta_{\mu\nu},
	\notag\\
&	\eta^{00}(\Lambda_{m\mu}, \Lambda_{m\nu'})= -\eta^{00}\bigl({\Gamma_m}^{\mu}, {\Gamma_m}^{\nu'}\bigr) = \delta_{\mu\nu},
	\notag\\
&	\eta^{11}\bigl(\Lambda_{i\mu},{\Gamma_{j'}}^{\nu'}\bigr)= -\eta^{11}(\Lambda_{i\mu'},{\Gamma_{j'}}^{\nu}) = \eta^{11}\bigl(\Lambda_{i'\mu},{\Gamma_{j}}^{\nu'}\bigr) = -\eta^{11}(\Lambda_{i'\mu'},{\Gamma_j}^{\nu}) = \delta_{ij} \delta_{\mu\nu},
	\notag\\
	&\eta^{11}\bigl(\Lambda_{m\mu},{\Gamma_m}^{\nu'}\bigr)=-\eta^{11}(\Lambda_{m\mu'},{\Gamma_m}^{\nu}) = \delta_{\mu\nu}.
	\label{BFosp2}
\end{align}

These bilinear forms are nondegenerate. Their inverses $\eta_{00}$ and $\eta_{11}$ are given by
\begin{align*}
	&\eta_{00}(T_{ij},T_{pq})= \eta_{00}({U_i}^j, {U_p}^q) = \eta_{11}(T_{ij},{U_p}^q) = \delta_{iq} \delta_{jp},
	\notag\\
	&\eta_{00}(T_{ij'}, T_{p'q})= \eta_{00}({U_i}^{j'}, {U_{p'}}^{q}) = \eta_{11}(T_{ij'},{U_{p'}}^q) = \eta_{11}(T_{i'j},{U_p}^{q'}) = -\delta_{ip} \delta_{jq},
	\notag \\
&	\eta_{00}(T_{im}, T_{j'm})= \eta_{00}({U_i}^m,{U_{j'}}^m) = \eta_{11}(T_{im},{U_{j'}}^m) = \eta_{11}(T_{i'm}, {U_j}^m) = -\delta_{ij},
	\notag \\
&	\eta_{00}(T_{\mu\nu}, T_{\rho\sigma})= \eta_{00}({U_{\mu}}^{\nu},{U_{\rho}}^{\sigma}) = -\eta_{11}(T_{\mu\nu},{U_{\rho}}^{\sigma})= -\delta_{\mu\sigma} \delta_{\nu\rho},
	\notag\\
&	\eta_{00}(T_{\mu\nu'}, T_{\rho'\sigma})= \eta_{00}({U_{\mu}}^{\nu'},{U_{\rho'}}^{\sigma}) = -\eta_{11}(T_{\mu\nu'},{U_{\rho'}}^{\sigma}) =
	-\eta_{11}(T_{\mu'\nu},{U_{\rho}}^{\sigma'})
	\notag\\
	&\hphantom{\eta_{00}(T_{\mu\nu'}, T_{\rho'\sigma})} {} =
	\begin{cases}
		-\delta_{\mu\rho} \delta_{\nu\sigma}, & \mu < \nu, \ \rho < \sigma,
		\\[5pt]
		-2 \delta_{\mu\rho}, & \mu = \nu, \ \rho=\sigma,
	\end{cases}
\end{align*}
and
\begin{align*}
	&\eta_{00}(\Lambda_{i\mu}, \Lambda_{j'\nu'})= -\eta_{00}(\Lambda_{i\mu'}, \Lambda_{j'\nu}) = -\eta_{00}\bigl({\Gamma_i}^{\mu}, {\Gamma_{j'}}^{\nu'}\bigr) = \eta_{00}({\Gamma_i}^{\mu'}, {\Gamma_{j'}}^{\nu}) = -\delta_{ij}\delta_{\mu\nu},
	\notag\\
&	\eta_{00}(\Lambda_{m\mu}, \Lambda_{m\nu'})= -\eta_{00}\bigl({\Gamma_m}^{\mu}, {\Gamma_m}^{\nu'}\bigr) = -\delta_{\mu\nu},
	\notag\\
&	\eta_{11}\bigl(\Lambda_{i\mu},{\Gamma_{j'}}^{\nu'}\bigr)= -\eta_{11}(\Lambda_{i\mu'},{\Gamma_{j'}}^{\nu}) = \eta_{11}\bigl(\Lambda_{i'\mu},{\Gamma_{j}}^{\nu'}\bigr) = -\eta_{11}(\Lambda_{i'\mu'},{\Gamma_j}^{\nu}) = \delta_{ij} \delta_{\mu\nu},
	\notag\\
&	\eta_{11}\bigl(\Lambda_{m\mu},{\Gamma_m}^{\nu'}\bigr)=-\eta_{11}(\Lambda_{m\mu'},{\Gamma_m}^{\nu}) = \delta_{\mu\nu}.
\end{align*}
Here again, additional nonvanishing components obtained from the symmetry properties are not listed.

We are now able to write down explicit formulas for the Casimir elements of \Zosp\ for $m = 2\ell+1$
\begin{align*}
	C_{00}={}& \sum_{i,j} \big( T_{ij}T_{ji} + {U_{i}}^{j}{U_{j}}^{i} \big) - \sum_{i<j} \big( \{T_{ij'},T_{i'j}\} + \{{U_{i}}^{j'},{U_{i'}}^{j}\} \big) \nn\\
	&-\sum_i \big( \{T_{im},T_{i'm}\} + \{{U_{i}}^m,{U_{i'}}^m\} \big)
	 - \sum_{\mu,\nu} \big( T_{\mu\nu}T_{\nu\mu} + {U_{\mu}}^{\nu}{U_{\nu}}^{\mu} \big) \nn\\
	&- 2\sum_\mu\big( \{T_{\mu\mu'},T_{\mu'\mu}\} + \{{U_{\mu}}^{\mu'},{U_{\mu'}}^{\mu}\} \big) - \sum_{\mu<\nu}\big( \{T_{\mu\nu'},T_{\mu'\nu}\} + \{{U_{\mu}}^{\nu'},{U_{\mu'}}^{\nu}\} \big) \nn\\
	&+ \sum_{i,\mu} \big( [\Lambda_{i'\mu'},\Lambda_{i\mu}] + [\Lambda_{i\mu'},\Lambda_{i'\mu}] + [{\Gamma_{i}}^{\mu},{\Gamma_{i'}}^{\mu'}] + [{\Gamma_{i'}}^{\mu},{\Gamma_{i}}^{\mu'}] \big) \nn\\
	&+ \sum_{\mu}\big( [\Lambda_{m\mu'},\Lambda_{m\mu}] + [{\Gamma_m}^{\mu},{\Gamma_m}^{\mu'}] \big),
\\
	C_{11}={}& \sum_{i,j} \{T_{ij},{U_j}^{i}\} - \sum_{i<j} \big( \{T_{ij'},{U_{i'}}^{j}\} + \{T_{i'j},{U_{i}}^{j'}\} \big) - \sum_{i}\big( \{T_{im},{U_{i'}}^m\} + \{T_{i'm},{U_{i}}^m\} \big) \nn\\
	&+ \sum_{\mu,\nu} \{T_{\mu\nu},{U_{\nu}}^{\mu}\} + 2 \sum_{\mu} \big( \{T_{\mu\mu'},{U_{\mu'}}^{\mu}\} + \{T_{\mu'\mu},{U_{\mu}}^{\mu'}\} \big) \nn\\
	&+ \sum_{\mu<\nu} \big( \{T_{\mu\nu'},{U_{\mu'}}^{\nu}\} + \{T_{\mu'\nu},{U_{\mu}}^{\nu'}\} \big) \nn\\
	&+ \sum_{i,\mu} \big( \{\Lambda_{i\mu},{\Gamma_{i'}}^{\mu'}\} - \{\Lambda_{i\mu'},{\Gamma_{i'}}^{\mu}\} + \{\Lambda_{i'\mu},{\Gamma_{i}}^{\mu'}\} - \{\Lambda_{i'\mu'},{\Gamma_{i}}^{\mu}\}\big) \nn\\
	&+ \sum_\mu \big( \{\Lambda_{m\mu},{\Gamma_{m}}^{\mu'}\} - \{\Lambda_{m\mu'},{\Gamma_{m}}^{\mu}\} \big).
\end{align*}
The corresponding Casimir elements for $m = 2\ell$ are obtained by eliminating the basis elements with index $m$.
Proposition~\ref{PROP:comm} also shows that the loop algebra of \Zosp\ admits two central extensions: one of degree $00$ and the other of degree~$11$.
Using the bilinear forms~\eqref{BFosp1} and~\eqref{BFosp2}, it is straightforward to write down all the commutation relations of the affine extension of \Zosp.
We therefore omit them here and simply note that all the relations for $\Z2$-$\osp(1|2)$ can be found in~\cite{AiSe}.

It is also worth mentioning the $\Z2$-graded color version of the super-Virasoro algebra.
In~\cite{AiSe}, by applying the Sugawara construction to the affine extension of $\Z2$-$\osp(1|2)$, a $\Z2$-graded color super-Virasoro algebra with central terms of degrees $00$ and $11$ is constructed.
This color super-Virasoro algebra contains currents of degrees $00$ and $11$, while those of degrees $01$ and $10$ are absent.
It remains a challenging problem to realize a $\Z2$-graded color super-Virasoro algebra with $00$- and $11$-graded central elements via the Sugawara construction for \Zosp.
A~realization of the color super-Virasoro algebra based on $\Z2$-$\osp(1|2)$ has also been discussed using Polyakov's soldering procedure \cite{AFITTsuper}.
In this approach, Virasoro currents of all~$\Z2$ degrees are realized, but the central extension of degree $11$ is not.
Constructing a $11$-graded central term via Polyakov's method also remains an open problem.

\section{Conclusion} \label{SEC:Conclusion}

In the present work, we have shown that color Lie algebras can admit 2nd order Casimir elements of nontrivial degrees, and that the corresponding loop algebras can admit central extensions with nontrivial gradings.
These structures arise from invariant bilinear forms on the color Lie algebras, which are determined by commutants of nontrivial degrees.

We have presented three examples of such color Lie algebras, going significantly beyond the previously known cases, in particular an infinite family of \qq\ and \Zosp.
These examples suggest the existence of a large class of color Lie algebras with similar properties.
A common feature of these examples is that they consist of multiple copies of the underlying Lie (super)algebra: \qq\ and \Zosp\ contain two copies of $\q(n)$ and $\osp(m|2n)$, respectively, while \Zsl\ consists of three copies of $\sl(2)$.

The present results give rise to an interesting and important problem, namely the physical realization of graded Casimir elements and central extensions.
It is well known that Casimir elements of Lie (super)algebras and central elements of affine Kac–Moody algebras play fundamental roles in physics.
As mentioned in the introduction, color Lie algebras are strongly related to physical systems.
Therefore, understanding the role of graded Casimir elements and central terms will be essential for applications of color Lie algebras to physical problems.

On the mathematical side, the present work reveals the rich structure of color Lie algebras compared with Lie (super)algebras.
The structure of color Lie algebras is a topic that has not yet been studied extensively.
Understanding this structure is, of course, essential for the study of representations of color Lie algebras.
In particular, the existence of zero roots in \Zosp\ suggests a rich class of highest- (or lowest-) weight representations in graded vector spaces, as investigated in~\cite{AmaAi} for $\Z2$-$\osp(1|2)$.

\subsection*{Acknowledgements}

The authors are grateful to Ruibin Zhang for helpful discussions and for providing valuable information.
J.~Van der Jeugt was supported by the Global Strategy Fund of OMU and is grateful to Osaka Metropolitan University for its hospitality. N.~Aizawa is supported by JSPS KAKENHI Grant Number JP23K03217.

\pdfbookmark[1]{References}{ref}
\LastPageEnding

\end{document}